\numberwithin{equation}{section}
\theoremstyle{plain}
\newtheorem{theorem}{Theorem}[section]
\newtheorem{lemma}[theorem]{Lemma}
\newtheorem{proposition}[theorem]{Proposition}
\theoremstyle{definition}
\newtheorem{definition}[theorem]{Definition}
\newtheorem{remark}[theorem]{Remark}
\title{\textbf{Cylindric quasi-implication algebras}}
\author{Joseph McDonald\footnote{University of Alberta, Department of Philosophy, Edmonton, T6G 2E7, Canada.}\footnote{Email: \url{jsmcdon1@ualberta.ca}, ORCID: \url{https://orcid.org/0000-0003-3824-1988}}.}
\date{}
\begin{document}

\maketitle
\par
\vspace{-.6cm}

\begin{abstract}
    In this note, we study the operation of Sasaki hook within the setting of quantum cylindric algebras by introducing cylindric quasi-implication algebras. It is first demonstrated that every quantum cylindric algebra can be converted into a cylindric quasi-implication algebra and conversely that every cylindric quasi-implication algebra gives rise to a quantum cylindric algebra. These constructions are then shown to induce an isomorphism between the category $\mathbf{CQIA}$ of cylindric quasi-implication algebras and the category $\mathbf{QCA}$ of quantum cylindric algebras. We then give two alternative constructions of a cylindric orthoframe $X_A$ from a cylindric quasi-implication algebra $A$. The first construction of $X_A$ arises via the non-zero elements of $A$ and generalizes the construction given by Harding in the setting of cylindric ortholattices from the perspective of MacNeille completions. The second construction of $X_A$ arises via the proper filters of $A$ and generalizes the construction given by McDonald in the setting of cylindric ortholattices from the perspective of canonical completions.   

    \par
\vspace{.2cm}
\noindent \textbf{Keywords:} Orthomodular lattice; quantum cylindric algebra; Sasaki hook; quasi-implication algebra.  
\end{abstract}
\section{Introduction}
Abbott \cite{abbott671,abbott672} introduced implication algebras as algebraic models for the operation of (material) implication in a Boolean algebra which is defined by the Boolean algebra polynomial $\pi_c(x,y)=x^{\perp}\vee y$. An implication algebra consists of a set $A$ equipped with a binary operation $\cdot\colon A\times A\to A$ satisfying the contraction, quasi-commutativity, and exchange equations: 
\[(x\cdot y)\cdot x=x,\hspace{.2cm}(x\cdot y)\cdot y=(y\cdot x)\cdot x,\hspace{.2cm}x\cdot (y\cdot z)=y\cdot(x\cdot z).\]
Abbot showed that $\langle B;\pi_c\rangle$ is an implication algebra for every Boolean algebra $B$ and that every bounded implication algebra gives rise to a Boolean algebra.

An \emph{ortholattice} is a bounded lattice $\langle A;\wedge,\vee,0,1\rangle$ equipped with an additional operation $^{\perp}\colon A\to A$, known as an \emph{orthocomplementation}, which is an order-inverting involutive complementation. An ortholattice is a \emph{modular ortholattice} if it satisfies the quasi-inequation $x\leq y\Rightarrow(x\vee y)\wedge z\leq x\vee(y\wedge z)$. Moreover, an ortholattice is an \emph{orthomodular lattice} if it satisfies the quasi-equation $x\leq y\Rightarrow y=x\vee(x^{\perp}\wedge y)$. Ortholattices form non-distributive generalizations of Boolean algebras and were popularized by Birkhoff and von Neumann \cite{birkhoff} who observed that the closed linear subspaces of a finite-dimensional Hilbert space form a modular ortholattice. Soon after, it was realized that the closed subspaces of an infinite-dimensional Hilbert space form an orthomodular lattice and that orthomodular lattices are suitable candidates as an algebraic model for quantum logic in the same way in which Boolean algebras form an algebraic model for the classical propositional calculus.

Hardegree \cite{hardegree1} proposed the following as the minimal conditions that an implication operation $\cdot\colon A\times A\to A$ should be required to satisfy in an ortholattice:
\begin{enumerate}
    \item $x\leq y\Rightarrow x\cdot y=1$ (Law of Implication); 
    \item $x\wedge(x\cdot y)\leq y$ (Modus Ponens); 
    \item $y^{\perp}\wedge(x\cdot y)\leq x^{\perp}$ (Modus Tollens). 
\end{enumerate}
 Hardegree \cite{hardegree3} showed that there are exactly 3 ortholattice polynomials satisfying these conditions in an orthomodular lattice: 
\begin{enumerate}
    \item $\pi_s(x,y)=x^{\perp}\vee(x\wedge y)$ (Sasaki hook);
    \item $\pi_d(x,y)=(x^{\perp}\wedge y^{\perp})\vee y$ (Dishkant implication);
    \item $\pi_k(x,y)=(x\wedge y)\vee(x^{\perp}\wedge y)\vee(x^{\perp}\wedge y^{\perp})$ (Kalmbach implication).
\end{enumerate}
 One can easily demonstrate that when an ortholattice is distributive (i.e., a Boolean algebra), then: 
\[\pi_c(x,y)=\pi_s(x,y)=\pi_d(x,y)=\pi_k(x,y).\] Hardegree \cite{hardegree1, hardegree2} introduced quasi-implication algebras as algebraic models of the implicational fragment of orthomodular lattices based on the operation of Sasaki hook and showed that bounded quasi-implication algebras stand in bijective correspondence with orthomodular lattices.  Recently, McDonald \cite{mcdonald1} studied quasi-implication algebras within the setting of quantum monadic algebras by introducing monadic quasi-implication algebras. A quantum monadic algebra consists of an orthomodular lattice $A$ equipped with a closure operator $\exists\colon A\to A$, known as a \emph{quantifier}, whose closed elements form an orthomodular sub-lattice. Janowitz \cite{janowitz} introduced quantifiers on orthomodular lattices and Harding \cite{harding} studied them, as well as quantum cylindric algebras, for their connection to subfactor theory. The  closely related monadic ortholattices have been studied by Harding, McDonald, and Peinado \cite{harding3} from the perspective of MacNeille completions, canonical completions, and duality theory as well as by Lin and McDonald \cite{lin} from the perspective of functional representations.   

In this note, we extend the results and constructions obtained in \cite{mcdonald1} to the setting of quantum cylindric algebras by introducing cylindric quasi-implication algebras. A \emph{quantum cylindric algebra} consists of an orthomodular lattice $A$ equipped with a family $(\exists_i)_{i\in I}\colon A\to A$ of pairwise commuting quantifiers and a family $(\delta_{i,k})_{i,k\in I}$ of constants satisfying certain conditions. It is first demonstrated that every quantum cylindric algebra can be converted into a cylindric quasi-implication algebra and conversely that every cylindric quasi-implication algebra gives rise to a quantum cylindric algebra. These constructions are then shown to induce an isomorphism between the category $\mathbf{CQIA}$ of cylindric quasi-implication algebras and the category $\mathbf{QCA}$ of quantum cylindric algebras. We then give two alternative constructions of a cylindric orthoframe $X_A$ from a cylindric quasi-implication algebra $A$. The first construction of $X_A$ arises via the non-zero elements of $A$ and generalizes the construction given by Harding \cite{harding} (which extends the findings of MacLaren \cite{maclaren}) in the setting of cylindric ortholattices from the perspective of MacNeille completions. The second construction of $X_A$ arises via the proper filters of $A$ and generalizes the construction given by McDonald \cite{mcdonald2} (which extends the findings of Goldblatt \cite{goldblatt} as well as Harding, McDonald, and Peinado \cite{harding3}) in the setting of cylindric ortholattices from the perspective of canonical completions.

\section{Quantum cylindric algebras}
We first provide some preliminaries for quantum cylindric algebras. For more details consult \cite{harding, h}. 

\begin{definition}\label{ortholattice}
    An \emph{ortholattice} is an algebra $\langle A;\wedge,\vee,^{\perp},0,1\rangle$ of similarity type $\langle 2,2,1,0,0\rangle$ satisfying the following conditions: 
    \begin{enumerate}
        \item $\langle A;\wedge,\vee,0,1\rangle$ is a bounded lattice;
        \item the operation $^{\perp}\colon A\to A$ is an \emph{orthocomplementation}: 
        \begin{enumerate}
             \item $x\wedge x^{\perp}=0$; $x\vee x^{\perp}=1$;
            \item $x\leq y\Rightarrow y^{\perp}\leq x^{\perp}$;
            \item $x=x^{\perp\perp}$.
        \end{enumerate}
    \end{enumerate}
\end{definition}

Conditions $2(a)-2(c)$ imply that $^{\perp}\colon A\to A$ is an order-inverting involutive complementation. Ortholattices, unlike Boolean algebras, are not necessarily distributive. Indeed, an ortholattice $A$ is a Boolean algebra if and only if $A$ is distributive.    
\begin{definition}\label{oml}
    An \emph{orthomodular lattice} is an ortholattice $\langle A;\wedge,\vee,^{\perp},0,1\rangle$ satisfying the quasi-equation $x\leq y\Longrightarrow y=x\vee(x^{\perp}\wedge y)$. 
\end{definition}

The following provides a useful characterization of orthomodular lattices. 
\begin{proposition}[\cite{kalmbach}]\label{oml characterization}
For any ortholattice $A$, the following are equivalent: 
\begin{enumerate}
    \item $A$ is an orthomodular lattice (in the sense of Definition \ref{oml});
    \item $A$ satisfies  $x\vee y=x\vee(x^{\perp}\wedge(x\vee y))$ or $x\wedge y=x\wedge(x^{\perp}\vee(x\wedge y))$;
\end{enumerate}
\end{proposition}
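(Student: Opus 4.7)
My plan is to prove the three-way equivalence between the orthomodular law (Definition \ref{oml}) and each of the two identities in condition (2), using the orthocomplementation laws of Definition \ref{ortholattice} and De Morgan duality.

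First I would show (1) $\Rightarrow$ the first identity in (2). Given any $x,y \in A$, since $x \leq x \vee y$, applying the orthomodular quasi-equation with $y$ replaced by $x \vee y$ yields $x \vee y = x \vee (x^{\perp} \wedge (x \vee y))$, which is exactly the equation as an unconditional identity.

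Next, for the first identity in (2) $\Rightarrow$ (1): suppose $x \leq y$. Then $x \vee y = y$, so substituting into $x \vee y = x \vee (x^{\perp} \wedge (x \vee y))$ gives $y = x \vee (x^{\perp} \wedge y)$, establishing the orthomodular quasi-equation.

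Finally, I would establish the equivalence of the two identities in (2) by De Morgan duality. Replacing $x$ by $x^{\perp}$ and $y$ by $y^{\perp}$ in the first identity gives $x^{\perp} \vee y^{\perp} = x^{\perp} \vee (x^{\perp\perp} \wedge (x^{\perp} \vee y^{\perp}))$, and applying $^{\perp}$ to both sides together with involution ($x = x^{\perp\perp}$) and De Morgan's laws (which follow from order-inversion and involution in any ortholattice) converts this to $x \wedge y = x \wedge (x^{\perp} \vee (x \wedge y))$. The converse direction is symmetric. Thus both identities in (2) are interderivable with each other and each is equivalent to (1).

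The only delicate point I foresee is making the De Morgan step fully rigorous from the axioms of Definition \ref{ortholattice}: one must first derive $(x \vee y)^{\perp} = x^{\perp} \wedge y^{\perp}$ and $(x \wedge y)^{\perp} = x^{\perp} \vee y^{\perp}$ from order-inversion and involution, which is standard but worth noting. Once De Morgan is in hand, the rest of the argument is immediate substitution.
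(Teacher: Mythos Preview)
Your argument is correct and is the standard proof of this characterization. Note, however, that the paper does not give its own proof of Proposition~\ref{oml characterization}: the result is stated with a citation to \cite{kalmbach} and left unproved, so there is nothing in the paper to compare your approach against. Your derivation of each identity in (2) from the quasi-equation (1) by the substitution $y \mapsto x \vee y$ (respectively the dual), the converse by specializing to $x \leq y$, and the interderivability of the two identities via De Morgan duality are all sound; the De Morgan laws do follow from the ortholattice axioms exactly as you indicate.
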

\begin{remark}
    Orthomodular lattices form a variety in the sense of universal algebra and hence are closed under the formation of homomorphic images, subalgebras, and direct products. 
\end{remark}
\begin{definition}\label{monadic ortholattice}
A \emph{monadic ortholattice} is an algebra $\langle A;\wedge,\vee,^{\perp},0,1,\exists\rangle$ of type $\langle 2,2,1,0,0,1\rangle$ satisfying the following conditions:
\begin{enumerate}
    \item $\langle A;\wedge,\vee,^{\perp},0,1\rangle$ is an ortholattice;
    \item $\exists\colon A\to A$ is a unary operator, known as a \emph{quantifier}, satisfying:
        \begin{enumerate}
        \item $\exists 0=0$
        \item $x\leq \exists x$
        \item $\exists(x\vee y)=\exists x\vee\exists y$
        \item $\exists\exists x=\exists x$
        \item $\exists(\exists x)^{\perp}=(\exists x)^{\perp}$
    \end{enumerate}
\end{enumerate}
We call $\langle A;\wedge,\vee,^{\perp},0,1,\exists\rangle$ a \emph{quantum monadic algebra} whenever $\langle A;\wedge,\vee,^{\perp},0,1\rangle$ is an orthomodular lattice.
\end{definition}
Noting that $\exists x=x$ iff $\exists(x^{\perp})=x^{\perp}$, conditions 2(a) through 2(e) amount to asserting that a quantifier on an orthomodular lattice is a closure operator whose closed elements form an orthomodular sub-lattice.
\begin{definition}\label{quantum cylindric algebra}
    An \emph{I-dimensional cylindric ortholattice} is an algebraic structure $\langle A;\wedge,\vee,^{\perp},0,1,(\exists_i)_{i\in I},(\delta_{i,k})_{i,k\in I}\rangle$ satisfying the following conditions: 
    \begin{enumerate}
    \item $\langle A;\wedge,\vee,^{\perp},0,1\rangle$ is an ortholattice;
        \item $\exists_i\colon A\to A$ is a quantifier for each $i\in I$; 
        \item $\exists_i\exists_ka=\exists_k\exists_ia$ for all $i,k\in I$; 
        \item $(\delta_{i,k})_{i,k\in I}$ is a family of constants, known as the \emph{diagonal elements}, satisfying the following conditions: 
            \begin{enumerate}
        \item $\delta_{i,k}=\delta_{k,i}$ and $\delta_{i,i}=1$;
        \item $i,l\not=k\Rightarrow\exists_{k}(\delta_{i,k}\wedge \delta_{k,l})=\delta_{i,l}$.
    \end{enumerate}
    \end{enumerate}
    We call $\langle A;\wedge,\vee,^{\perp},0,1,(\exists_i)_{i\in I},(\delta_{i,k})_{i,k\in I}\rangle$ an \emph{I-dimensional quantum cylindric algebra} whenever the reduct $\langle A;\wedge,\vee,^{\perp},0,1\rangle$ is an orthomodular lattice. We call the reduct $\langle A;\wedge,\vee,^{\perp},0,1,(\exists_{i})_{i\in I}\rangle$ the \emph{I-dimensional diagonal-free quantum cylindric algebra reduct} of $A$ provided condition 1-3 are satisfied.   
\end{definition}

\section{Cylindric quasi-implication algebras}

In this section, we introduce cylindric quasi-implication algebras and construct the promised isomorphism between the category $\mathbf{QCA}$ of quantum cylindric algebras and the category $\mathbf{CQIA}$ of cylindric quasi-implication algebras. 

\begin{definition}\label{quasi-implication algebra}
    A \emph{quasi-implication algebra} is a magma $\langle A;\cdot\rangle$ satisfying: 
    \begin{enumerate}
        \item $(x\cdot y)\cdot x=x$;
        \item $(x\cdot y)\cdot(x\cdot z)=(y\cdot x)\cdot(y\cdot z)$;
        \item $((x\cdot y)\cdot(y\cdot x))\cdot x=((y\cdot x)\cdot(x\cdot y))\cdot y$.
    \end{enumerate}
\end{definition}
  
\begin{lemma}[\cite{hardegree1}]\label{lemma}
    Any quasi-implication algebra $\langle A;\cdot\rangle$ satisfies:
    \begin{enumerate}
        \item $x\cdot(x\cdot y)=x\cdot y$;
        \item $x\cdot x=(x\cdot y)\cdot(x\cdot y)$;
        \item $x\cdot x=y\cdot y$; 
        \item $(x\cdot y)\cdot(x\cdot z)=x\cdot((x\cdot y)\cdot z)$. 
    \end{enumerate}
    \end{lemma}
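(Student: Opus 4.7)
The plan is to prove the four identities not in the order they are stated but in the order $(1), (4), (3), (2)$, with a short auxiliary identity interposed between $(4)$ and $(3)$. Axiom~$1$ alone suffices for $(1)$; axioms~$1$ and~$2$ together with $(1)$ suffice for $(4)$; and axiom~$3$ is needed only for $(3)$.

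First I would derive $(1)$ from two applications of axiom~$1$: instantiating $(a\cdot b)\cdot a=a$ with $a=x\cdot y$ and $b=x$ gives $((x\cdot y)\cdot x)\cdot(x\cdot y)=x\cdot y$, and the inner factor $(x\cdot y)\cdot x$ collapses to $x$ by axiom~$1$ again, producing $(1)$. For $(4)$, I would replace $y$ by $x\cdot y$ in axiom~$2$, obtaining $(x\cdot(x\cdot y))\cdot(x\cdot z)=((x\cdot y)\cdot x)\cdot((x\cdot y)\cdot z)$; axiom~$1$ simplifies the right-hand inner factor to $x$, while $(1)$ rewrites $x\cdot(x\cdot y)$ on the left as $x\cdot y$, giving $(4)$.

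Then I would extract a short auxiliary identity, $(y\cdot x)\cdot(y\cdot x)=x\cdot x$, by specialising $z=x$ in both axiom~$2$ and $(4)$ and chaining the two resulting equalities via axiom~$1$. Specifically, axiom~$2$ at $z=x$ gives $(x\cdot y)\cdot(x\cdot x)=(y\cdot x)\cdot(y\cdot x)$, while $(4)$ at $z=x$ gives $(x\cdot y)\cdot(x\cdot x)=x\cdot((x\cdot y)\cdot x)=x\cdot x$. It will be convenient to record this identity in the renamed form $(u\cdot v)\cdot(u\cdot v)=v\cdot v$.

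The main obstacle is $(3)$, $x\cdot x=y\cdot y$, which is the sole place where axiom~$3$ is used. Letting $A=(x\cdot y)\cdot(y\cdot x)$ and $B=(y\cdot x)\cdot(x\cdot y)$, axiom~$3$ reads $A\cdot x=B\cdot y$; self-composing both sides and invoking the auxiliary yields $(A\cdot x)\cdot(A\cdot x)=x\cdot x$ and $(B\cdot y)\cdot(B\cdot y)=y\cdot y$, and these two expressions are equal since $A\cdot x=B\cdot y$, whence $x\cdot x=y\cdot y$. Finally $(2)$ falls out at once: the auxiliary with $u=x,\,v=y$ gives $(x\cdot y)\cdot(x\cdot y)=y\cdot y$, and $(3)$ identifies this with $x\cdot x$.
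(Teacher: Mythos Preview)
Your proof is correct. Each step checks: identity~(1) follows from two applications of axiom~1 exactly as you describe; identity~(4) follows by substituting $x\cdot y$ for $y$ in axiom~2 and simplifying both sides via axiom~1 and~(1); the auxiliary $(u\cdot v)\cdot(u\cdot v)=v\cdot v$ is obtained by combining axiom~2 and~(4) at $z=x$; identity~(3) then follows from axiom~3 via the auxiliary; and~(2) is immediate from the auxiliary and~(3).

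The paper does not supply its own proof of this lemma: it simply cites Hardegree~\cite{hardegree1} and moves on. So there is no in-paper argument to compare against. Your self-contained derivation is a genuine addition relative to what the paper presents, and the order you chose---proving~(4) before~(2) and~(3), and isolating the auxiliary $(u\cdot v)\cdot(u\cdot v)=v\cdot v$---makes the dependence on axiom~3 transparent: it is used only for~(3), with~(2) then a corollary.
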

\begin{definition}\label{top element}
    Let $A$ be a quasi-implication algebra. Then define a constant $1\in A$ by $1:=x\cdot x$. 
\end{definition}
Note that by virtue of condition 3 of Lemma \ref{lemma}, the constant element $1$ is well-defined in any quasi-implication algebra. 
\begin{proposition}[\cite{hardegree1}]\label{top element lemma}
    Every quasi-implication algebra $A$ satisfies $1\cdot x=x$ and $x\cdot 1=1$ for all $x\in A$. 
\end{proposition}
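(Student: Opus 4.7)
The plan is to derive both identities by a single substitution into material already available, exploiting the fact (part 3 of Lemma \ref{lemma}) that $1 = x \cdot x$ holds for every $x \in A$. The key observation is that we may represent $1$ using the \emph{same} element $x$ appearing in the identity we wish to prove, so that the axioms apply directly without any manipulation.

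For the left unit law $1 \cdot x = x$, I would substitute $y := x$ into the contraction axiom $(x \cdot y) \cdot x = x$ from Definition \ref{quasi-implication algebra}. This immediately yields $(x \cdot x) \cdot x = x$, which is $1 \cdot x = x$ by Definition \ref{top element}.

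For the right absorption law $x \cdot 1 = 1$, I would substitute $y := x$ into the identity $x \cdot (x \cdot y) = x \cdot y$ supplied by part 1 of Lemma \ref{lemma}. This yields $x \cdot (x \cdot x) = x \cdot x$, which is $x \cdot 1 = 1$, again by Definition \ref{top element}.

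There is no genuine obstacle here: each identity reduces to a one-line substitution. The only conceptual point worth flagging is that one must represent $1$ as $x \cdot x$ with the specific $x$ occurring in the equation being proved, rather than as $y \cdot y$ for an arbitrary $y$; the well-definedness of $1$ (part 3 of Lemma \ref{lemma}) is what legitimates this choice.
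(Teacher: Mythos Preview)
Your proposal is correct: both identities follow by the one-line substitutions you describe, and the well-definedness of $1$ (Lemma~\ref{lemma}(3)) justifies representing $1$ as $x\cdot x$ for the particular $x$ at hand. The paper itself does not supply a proof of this proposition but merely cites it from Hardegree~\cite{hardegree1}, so there is nothing to compare against; your argument stands as a complete and standard derivation.
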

\begin{theorem}[\cite{hardegree1}]\label{oml to qia}
    If $A$ is an orthomodular lattice, then $\langle A;\pi_s\rangle$ is a quasi-implication algebra. 
\end{theorem}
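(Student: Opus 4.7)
The plan is to fix the notation $x\cdot y:=\pi_s(x,y)=x^{\perp}\vee(x\wedge y)$ and verify the three equations of Definition \ref{quasi-implication algebra} by direct computation, leaning on orthomodularity in the form of Proposition \ref{oml characterization} together with the Foulis--Holland theorem (any three elements, one of which commutes with the other two, generate a distributive sublattice). The standing preliminary observation I would establish first is that $x$ commutes with $x\cdot y$; indeed $x\cdot y$ belongs to the sublattice generated by $x$ and $x\wedge y$, each of which commutes with $x$, so commutativity transfers. Consequently $x^{\perp}$ commutes with $x\cdot y$ as well, and the elements $\{x,x^{\perp},x\wedge y\}$ form a distributive triple.

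For equation (1), I would use Foulis--Holland to compute $(x\cdot y)\wedge x=(x^{\perp}\vee(x\wedge y))\wedge x=(x^{\perp}\wedge x)\vee(x\wedge y\wedge x)=x\wedge y$. Then $(x\cdot y)\cdot x=(x\cdot y)^{\perp}\vee((x\cdot y)\wedge x)=(x\wedge(x^{\perp}\vee y^{\perp}))\vee(x\wedge y)$, and since $(x\wedge y)^{\perp}\wedge x=x\wedge(x^{\perp}\vee y^{\perp})$, the orthomodular law applied to $x\wedge y\leq x$ collapses this to $x$.

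Equations (2) and (3) I would handle by reducing both sides to a common normal form. The useful intermediate identity to extract from the preliminaries is that $x\cdot(x\cdot y)=x\cdot y$ and that $(x\cdot y)\wedge x=x\wedge y$, so repeated Sasaki hooks restricted by $x$ always recover the meet $x\wedge y$. For (2), after expanding $(x\cdot y)\cdot(x\cdot z)$ and using that $x\cdot y$ commutes with $x$, Foulis--Holland lets me rewrite the expression in terms of $x$, $x^{\perp}$, $x\wedge y$, $x\wedge z$, and symmetric analysis on the right-hand side produces the same expression, so both sides coincide. For (3), I would first simplify $(x\cdot y)\cdot(y\cdot x)$ by noting that $(y\cdot x)\wedge(x\cdot y)$ reduces (via commutativity and Foulis--Holland) to $x\wedge y$, so that $(x\cdot y)\cdot(y\cdot x)=(x\cdot y)^{\perp}\vee(x\wedge y)$; then $((x\cdot y)\cdot(y\cdot x))\cdot x$ can be computed using the commutativity of its first argument with $x$, and the analogous computation on the right-hand side using symmetry in $x,y$ produces the same result.

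The main obstacle will be keeping track of which pairs actually commute at each step so that the Foulis--Holland distribution is legitimate. The critical commutativity relations needed are $xC(x\cdot y)$, $xC(x\wedge y)$, and $(x\cdot y)C(x\wedge y)$, together with their symmetric counterparts obtained by swapping $x$ and $y$; once these are noted, each of the three axioms becomes a distributive-lattice computation carried out inside an appropriate Boolean sublattice, followed by a single invocation of the orthomodular law to close up the equality.
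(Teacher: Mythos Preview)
The paper does not give its own proof of this theorem; it is quoted from Hardegree and used as a black box. So there is no in-paper argument to compare against, and your outline is essentially a reconstruction of the standard verification via commutativity and the Foulis--Holland theorem.

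Your treatment of axiom~(1) is correct, and for axiom~(2) the sketch is vague but salvageable: carrying the computation through, both sides reduce to $(x\wedge y)\cdot z=(x\wedge y)^{\perp}\vee(x\wedge y\wedge z)$, which is manifestly symmetric in $x,y$.

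There is, however, a concrete error in your handling of axiom~(3). You assert that $(x\cdot y)\wedge(y\cdot x)$ reduces to $x\wedge y$, and hence that $(x\cdot y)\cdot(y\cdot x)=(x\cdot y)^{\perp}\vee(x\wedge y)$. This is false already in a Boolean algebra, where $(x\cdot y)\wedge(y\cdot x)=(x\to y)\wedge(y\to x)=x\leftrightarrow y\neq x\wedge y$. The correct Foulis--Holland step (using that $x\wedge y$ commutes with both $x^{\perp}$ and $y^{\perp}$) gives
\[
(x\cdot y)\wedge(y\cdot x)=(x\wedge y)\vee(x^{\perp}\wedge y^{\perp}),
\]
whence
\[
(x\cdot y)\cdot(y\cdot x)=(x\cdot y)^{\perp}\vee(x\wedge y)\vee(x\vee y)^{\perp}=x\vee(x\vee y)^{\perp}
\]
after one application of orthomodularity. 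From here the argument does close: since $x\leq x\vee(x\vee y)^{\perp}$, one gets $((x\cdot y)\cdot(y\cdot x))\cdot x=(x^{\perp}\wedge(x\vee y))\vee x=x\vee y$, which is symmetric in $x,y$. So your overall strategy is sound, but the intermediate identity you recorded is wrong and would derail the computation if taken at face value.
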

\begin{lemma}[\cite{hardegree1}]\label{partial order}
If $A$ is a quasi-implication algebra, then $\preceq\subseteq A^2$ defined by $x\preceq y$ iff $x\cdot y=1$ is a partial order.      
\end{lemma}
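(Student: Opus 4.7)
The plan is to verify the three defining properties of a partial order: reflexivity, antisymmetry, and transitivity. Reflexivity is immediate from Definition \ref{top element}, since by definition $x\cdot x = 1$, so $x\preceq x$ for every $x\in A$.

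For antisymmetry, suppose $x\preceq y$ and $y\preceq x$, i.e., $x\cdot y = 1$ and $y\cdot x = 1$. I would invoke axiom 3 of Definition \ref{quasi-implication algebra}, namely
\[((x\cdot y)\cdot(y\cdot x))\cdot x = ((y\cdot x)\cdot(x\cdot y))\cdot y,\]
and substitute the assumed equalities to reduce both sides to $(1\cdot 1)\cdot x$ and $(1\cdot 1)\cdot y$ respectively. Since $1 = 1\cdot 1$ by Definition \ref{top element} and Lemma \ref{lemma}(3), and since $1\cdot z = z$ for all $z$ by Proposition \ref{top element lemma}, this collapses to $x = y$.

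For transitivity, suppose $x\preceq y$ and $y\preceq z$, i.e., $x\cdot y = 1$ and $y\cdot z = 1$. Here I would apply axiom 2 of Definition \ref{quasi-implication algebra}:
\[(x\cdot y)\cdot(x\cdot z) = (y\cdot x)\cdot(y\cdot z).\]
The hypothesis $x\cdot y = 1$ reduces the left-hand side to $1\cdot(x\cdot z) = x\cdot z$ using Proposition \ref{top element lemma}, while the hypothesis $y\cdot z = 1$ reduces the right-hand side to $(y\cdot x)\cdot 1 = 1$, again by Proposition \ref{top element lemma}. Chaining these equalities gives $x\cdot z = 1$, i.e., $x\preceq z$.

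None of the three steps looks like a serious obstacle once one spots the right axiom: antisymmetry is essentially the content of axiom 3 and transitivity is essentially the content of axiom 2, both unlocked by Proposition \ref{top element lemma}. The only place one might stumble is the antisymmetry step, where the correct move is to substitute both hypotheses simultaneously into axiom 3 rather than trying to manipulate the products one at a time; so I would be careful to present that substitution clearly.
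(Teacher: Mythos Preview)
Your argument is correct: reflexivity follows from Definition~\ref{top element}, antisymmetry from axiom~3 of Definition~\ref{quasi-implication algebra} together with Proposition~\ref{top element lemma}, and transitivity from axiom~2 together with Proposition~\ref{top element lemma}, exactly as you describe. Note, however, that the paper does not supply its own proof of this lemma; it simply cites Hardegree~\cite{hardegree1}, so there is no in-paper argument to compare against---your proof is the standard direct verification and matches what one finds in the cited source.
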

Notice that for any quasi-implication algebra $A$, we have $x\cdot 1=1$ for all $x\in A$ by Proposition \ref{top element lemma}, and hence $x\preceq 1$ for all $x\in A$ by Lemma \ref{partial order}. Hence the constant element $1\in A$ is the greatest element in the poset $\langle A;\preceq\rangle$. 

\begin{proposition}[\cite{hardegree1}]\label{left monotone}
    In any quasi-implication algebra $A$, if $y\cdot z=1$, then $(x\cdot y)\cdot(x\cdot z)=1$. 
\end{proposition}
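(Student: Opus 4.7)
The plan is to reduce the claim to an instance of the second defining axiom of a quasi-implication algebra together with the basic identity $w \cdot 1 = 1$ from Proposition \ref{top element lemma}, exploiting the fact that axiom 2 of Definition \ref{quasi-implication algebra} is exactly a ``swap'' identity that lets me interchange the roles of $x$ and $y$ in the expression $(x \cdot y) \cdot (x \cdot z)$.

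Concretely, assuming $y \cdot z = 1$, I would first rewrite the target term using axiom 2 of Definition \ref{quasi-implication algebra}, which gives
\[
(x \cdot y) \cdot (x \cdot z) \;=\; (y \cdot x) \cdot (y \cdot z).
\]
This is the decisive move: after the swap, the inner factor $y \cdot z$ is precisely the term assumed to equal $1$.

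Next, I would substitute the hypothesis $y \cdot z = 1$ into the right-hand side to obtain $(y \cdot x) \cdot 1$, and then apply Proposition \ref{top element lemma}, which tells us that $w \cdot 1 = 1$ for every element $w$, with $w = y \cdot x$. Chaining the equalities yields
\[
(x \cdot y) \cdot (x \cdot z) \;=\; (y \cdot x) \cdot (y \cdot z) \;=\; (y \cdot x) \cdot 1 \;=\; 1,
\]
which is the desired conclusion.

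There is no real obstacle here: the proof is essentially a two-line manipulation, and the only subtlety is recognizing that axiom 2 of Definition \ref{quasi-implication algebra} is well suited for this kind of ``left monotonicity in the second argument'' result because it isolates $y \cdot z$ as a sub-term on the right-hand side. No appeal to Lemma \ref{lemma} or to Theorem \ref{oml to qia} is needed; the argument goes through in any quasi-implication algebra purely from its equational axioms together with the properties of the constant $1$ already established.
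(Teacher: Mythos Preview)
Your argument is correct: axiom~2 of Definition~\ref{quasi-implication algebra} gives $(x\cdot y)\cdot(x\cdot z)=(y\cdot x)\cdot(y\cdot z)$, and then the hypothesis $y\cdot z=1$ together with $w\cdot 1=1$ from Proposition~\ref{top element lemma} finishes the job. The paper itself does not supply a proof of this proposition but merely cites Hardegree~\cite{hardegree1}, so there is nothing to compare against; your derivation is the natural one and would serve perfectly well as the omitted proof.
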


\begin{definition}\label{bottom element}
    A \emph{bounded quasi-implication algebra} is a quasi-implication algebra $A$ equipped with a constant $0\in A$ defined by $0\cdot x=1$ for all $x\in A$. 
\end{definition}
By Lemma \ref{partial order} and Definition \ref{bottom element},  $0$ is the least element in the poset $\langle A;\preceq\rangle$ for any bounded quasi-implication algebra $A$. Moreover, since for any orthomodular lattice $A$ and $x\in A$, we have: \[\pi_s(0,x)=0^{\perp}\vee(0\wedge x)=1\vee(0\wedge x)=1\vee 0=1,\]  it follows that $\langle A;p_s\rangle$ further induces a bounded quasi-implication algebra.

Conversely to that of Lemma \ref{oml to qia}, the following result shows that every bounded quasi-implication algebra induces an orthomodular lattice. 

\begin{lemma}[\cite{hardegree1}]\label{quasi-implication algebra is an orthomodular lattice}
    Every bounded quasi-implication algebra forms an orthomodular lattice in which for all $x,y\in A$:
    \begin{enumerate}
        \item the orthocomplementation of $x$ is given by $x\cdot 0$;
        \item the least upper bound of $\{x,y\}$ is given by $((x\cdot y)\cdot(y\cdot x))\cdot x$;
        \item the greatest lower bound of $\{x,y\}$ is given by $((((x\cdot0)\cdot (y\cdot0))\cdot((y\cdot0)\cdot (x\cdot0)))\cdot (x\cdot0)\cdot0)$. 
    \end{enumerate}
\end{lemma}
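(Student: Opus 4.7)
The plan is to equip the bounded quasi-implication algebra $A$ with the lattice and orthocomplement operations specified in the statement and then verify each ortholattice axiom together with the orthomodular law. Throughout, I would work with the partial order $\preceq$ supplied by Lemma \ref{partial order}, using that $1$ is already its greatest element and, by Definition \ref{bottom element} combined with Lemma \ref{partial order}, $0$ is its least element.

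First I would establish that $x^{\perp} := x \cdot 0$ is an orthocomplementation. Order-reversal is immediate from Proposition \ref{left monotone}: if $x \preceq y$, so $x \cdot y = 1$, then $(y \cdot 0) \cdot (x \cdot 0) = 1$ by a direct application of that proposition (with the roles chosen so that the conclusion fires on the constant $0$). The involution identity $(x \cdot 0) \cdot 0 = x$ is the key algebraic step; it is obtained by combining axiom 3 of Definition \ref{quasi-implication algebra} with Lemma \ref{lemma}(4) and instantiating Proposition \ref{top element lemma} at arguments involving $0$ so that the resulting string of implications collapses to $x$. The complement laws $x \wedge x^{\perp} = 0$ and $x \vee x^{\perp} = 1$ will then fall out once the join has been identified, since $x \cdot x^{\perp} = x \cdot (x \cdot 0) = x \cdot 0 = x^{\perp}$ by Lemma \ref{lemma}(1), and dually for the meet.

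Next I would verify that $x \vee y := ((x \cdot y) \cdot (y \cdot x)) \cdot x$ is the least upper bound of $\{x,y\}$ in $\langle A; \preceq \rangle$. Axiom 3 of Definition \ref{quasi-implication algebra} gives the crucial symmetry $((x \cdot y) \cdot (y \cdot x)) \cdot x = ((y \cdot x) \cdot (x \cdot y)) \cdot y$, reducing the upper bound check for $y$ to that for $x$. To show $x \preceq x \vee y$, I would expand $x \cdot \bigl(((x \cdot y)\cdot(y \cdot x)) \cdot x\bigr)$ and apply axiom 1 of Definition \ref{quasi-implication algebra} together with Lemma \ref{lemma}(1) to reduce it to $1$. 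For the universal property, given $x, y \preceq z$, a computation that chains axiom 2 of Definition \ref{quasi-implication algebra}, Proposition \ref{left monotone}, and Lemma \ref{lemma}(4) converts $x \cdot z = y \cdot z = 1$ into $(x \vee y) \cdot z = 1$. Once $^{\perp}$ is known to be an orthocomplementation and $\vee$ to be the join, De Morgan identifies the meet as $(x^{\perp} \vee y^{\perp})^{\perp}$, which upon substitution yields exactly the formula in item 3 of the statement.

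Finally, orthomodularity is checked via the equivalent condition $x \vee y = x \vee (x^{\perp} \wedge (x \vee y))$ from Proposition \ref{oml characterization}(2). Substituting the derived formulas for $\vee$ and $\wedge$ and simplifying using Lemma \ref{lemma}(4) together with axiom 3 of Definition \ref{quasi-implication algebra}, the right-hand side collapses to the definition of $x \vee y$. The main obstacle is the involution identity $(x \cdot 0) \cdot 0 = x$; every subsequent step hinges on it, since without involution the map $x \mapsto x \cdot 0$ fails to be an orthocomplementation and the De Morgan derivation of the meet loses its meaning. The remaining work is algebraic bookkeeping with the three quasi-implication axioms and the consequences collected in Lemma \ref{lemma}.
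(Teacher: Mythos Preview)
The paper does not supply its own proof of this lemma: it is stated with a citation to Hardegree \cite{hardegree1} and no \texttt{proof} environment follows. So there is nothing in the paper to compare your argument against on the level of technique; the result is simply imported.

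Your outline is a reasonable reconstruction of how the argument goes in the source, and the identification of the involution law $(x\cdot 0)\cdot 0 = x$ as the load-bearing step is correct. One small caution: your claim that order-reversal of $x\mapsto x\cdot 0$ is ``immediate from Proposition~\ref{left monotone}'' is not quite right as stated. That proposition gives monotonicity in the \emph{second} argument (if $y\preceq z$ then $x\cdot y\preceq x\cdot z$), whereas what you need is antitonicity in the \emph{first} argument when the second is $0$. This does follow, but it requires an extra manoeuvre via axiom~2 of Definition~\ref{quasi-implication algebra} (or, equivalently, first establishing the involution and then arguing through it). The remainder of your sketch---symmetry of the join expression from axiom~3, the De~Morgan derivation of the meet, and verification of orthomodularity via Proposition~\ref{oml characterization}(2)---tracks the standard route and would go through once the antitonicity and involution facts are in hand.
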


The proceeding lemma will be exploited within the constructions and proofs throughout the remainder of this paper. 
\begin{lemma}[\cite{hardegree1}]\label{meets}
    If $A$ is a bounded quasi-implication algebra, then: \[((((x\cdot0)\cdot (y\cdot0))\cdot((y\cdot0)\cdot (x\cdot0)))\cdot (x\cdot0)\cdot0)=((x\cdot y)\cdot(x\cdot 0))\cdot 0\] so that $((x\cdot y)\cdot(x\cdot 0))\cdot 0$ is the greatest lower bound of $\{x,y\}$ in $A$.  
\end{lemma}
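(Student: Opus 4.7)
The plan is to work in the orthomodular lattice structure supplied by Lemma \ref{quasi-implication algebra is an orthomodular lattice}, in which $u \cdot 0 = u^{\perp}$ and the quasi-implication operation agrees with the Sasaki hook $\pi_s$. Part (3) of that lemma already identifies the left-hand side of the claimed equation with the greatest lower bound $x \wedge y$ of $\{x,y\}$ in this orthomodular lattice, so the whole task reduces to verifying that the much simpler right-hand expression $((x \cdot y) \cdot (x \cdot 0)) \cdot 0$ also evaluates to $x \wedge y$. Once that equality is established, the "moreover" clause that $((x \cdot y) \cdot (x \cdot 0)) \cdot 0$ is the greatest lower bound of $\{x,y\}$ will be immediate from part (3) of Lemma \ref{quasi-implication algebra is an orthomodular lattice}.

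First I would rewrite the right-hand side, using $u \cdot 0 = u^{\perp}$, as $(\pi_s(x,y) \cdot x^{\perp})^{\perp}$. Unfolding the inner Sasaki hook gives
$$\pi_s(x,y) \cdot x^{\perp} \;=\; \pi_s(x,y)^{\perp} \vee \bigl(\pi_s(x,y) \wedge x^{\perp}\bigr).$$
Because $\pi_s(x,y) = x^{\perp} \vee (x \wedge y) \geq x^{\perp}$, the second disjunct collapses to $x^{\perp}$, so the display reduces to $\pi_s(x,y)^{\perp} \vee x^{\perp}$. Taking orthocomplements and applying De Morgan twice then yields
$$((x \cdot y) \cdot (x \cdot 0)) \cdot 0 \;=\; x \wedge \pi_s(x,y) \;=\; x \wedge \bigl(x^{\perp} \vee (x \wedge y)\bigr).$$

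To close the computation I would invoke orthomodularity in its dual form: since $x \wedge y \leq x$, one has $x \wedge y = x \wedge \bigl((x \wedge y) \vee x^{\perp}\bigr)$, so the displayed expression equals $x \wedge y$, matching the left-hand side. The main obstacle is not any one step in isolation but keeping the bookkeeping clean: one has to recognise that $\cdot 0$ is the orthocomplementation, that $x^{\perp} \leq \pi_s(x,y)$ so the absorbing simplification is available, and that the final simplification is precisely the orthomodular identity applied to the pair $x \wedge y \leq x$. Everything else is a routine chain of De Morgan laws within the orthomodular lattice delivered by Lemma \ref{quasi-implication algebra is an orthomodular lattice}.
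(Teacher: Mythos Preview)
The paper offers no proof of this lemma; it is imported wholesale from Hardegree \cite{hardegree1}. Your argument is mathematically correct, and the orthomodular computation you carry out is in fact exactly the content of the paper's later Lemma \ref{useful lemma}, transplanted into the orthomodular lattice supplied by Lemma \ref{quasi-implication algebra is an orthomodular lattice}.

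The one dependency you should make explicit is the identity $x\cdot y=\pi_s(x,y)$ in that induced lattice. Lemma \ref{quasi-implication algebra is an orthomodular lattice} records only that $u\cdot 0=u^{\perp}$ and that $\vee,\wedge$ are given by the indicated $\cdot$-terms; the further fact that the original operation $\cdot$ coincides with the Sasaki hook of the resulting orthomodular lattice is the other half of Hardegree's bijection and is not stated in this paper prior to Lemma \ref{meets} (the round-trip identities surface only in the proof of the category isomorphism, and there by citation). Since Lemma \ref{meets} is itself being quoted from \cite{hardegree1}, there is no circularity in also quoting $\cdot=\pi_s$ from the same source, but you should flag the step rather than leave it tacit.
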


The following were introduced by McDonald \cite{mcdonald1} within the setting of quantum monadic algebras. 
\begin{definition}\label{def of mqia}
    A \emph{monadic quasi-implication algebra} is an algebra $\langle A;\cdot,0,\Diamond\rangle$ of similarity type $\langle 2,0,1\rangle$ satisfying the following conditions: 
    \begin{enumerate}
        \item $\langle A;\cdot,0\rangle$ is a bounded quasi-implication algebra; 
        \item $\Diamond\colon A\to A$ is a unary operation satisfying the following conditions: 
        \begin{enumerate}
            \item $\Diamond\Diamond x\cdot\Diamond x=1$ and $x\cdot\Diamond x=1$;
            \item $\Diamond(\Diamond x\cdot 0)=\Diamond x\cdot 0$ and $\Diamond 0=0$; 
            \item $\Diamond(((x\cdot 0)\cdot(y\cdot0))\cdot x)=((\Diamond x\cdot 0)\cdot(\Diamond y\cdot0))\cdot \Diamond x$
            
        \end{enumerate}
    \end{enumerate}
\end{definition}
Below is an immediate consequence of the axioms of monadic quasi-implication algebras and the induced partial order relation described in Lemma \ref{partial order}. 
\begin{proposition}[\cite{mcdonald1}]\label{idempotent}
    If $A$ is a monadic quasi-implication algebra, then $\Diamond\colon A\to A$ is idempotent and monotone.  
\end{proposition}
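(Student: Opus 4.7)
My plan is to prove idempotence by a direct application of axiom $2(a)$ together with the antisymmetry of the partial order $\preceq$ from Lemma \ref{partial order}, and to prove monotonicity by reading axiom $2(c)$ as the join-preservation law $\Diamond(x \vee y) = \Diamond x \vee \Diamond y$ in the induced orthomodular lattice structure.

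For idempotence, the first clause of axiom $2(a)$ gives $\Diamond\Diamond x \cdot \Diamond x = 1$, so $\Diamond\Diamond x \preceq \Diamond x$. Applying the second clause $x \cdot \Diamond x = 1$ with $x$ replaced by $\Diamond x$ yields $\Diamond x \cdot \Diamond\Diamond x = 1$, i.e., $\Diamond x \preceq \Diamond\Diamond x$. Antisymmetry of $\preceq$ then forces $\Diamond\Diamond x = \Diamond x$.

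For monotonicity, the key preliminary step is to show that, in the orthomodular lattice structure carried by the bounded quasi-implication algebra $A$ (Lemma \ref{quasi-implication algebra is an orthomodular lattice}), the compound term $((x \cdot 0) \cdot (y \cdot 0)) \cdot x$ computes the join $x \vee y$. I would obtain this from the meet formula of Lemma \ref{meets} applied to the pair $x \cdot 0,\, y \cdot 0$, combined with the De Morgan identity $x \vee y = ((x \cdot 0) \wedge (y \cdot 0)) \cdot 0$ and the involutivity $(x \cdot 0) \cdot 0 = x$ of the orthocomplementation given in Lemma \ref{quasi-implication algebra is an orthomodular lattice}. Once this identification is in place, axiom $2(c)$ becomes $\Diamond(x \vee y) = \Diamond x \vee \Diamond y$. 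Then for any $x, y \in A$ with $x \preceq y$, we have $x \vee y = y$ in the induced orthomodular lattice, whence $\Diamond y = \Diamond(x \vee y) = \Diamond x \vee \Diamond y$, so $\Diamond x \leq \Diamond y$, i.e., $\Diamond x \preceq \Diamond y$.

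The main obstacle is the preliminary identification of $((x \cdot 0) \cdot (y \cdot 0)) \cdot x$ with the join, since this requires juggling the meet formula, De Morgan's law, and double orthocomplementation inside the quasi-implication algebra signature. Once this bookkeeping is dispatched, both idempotence and monotonicity reduce to short $\preceq$-calculations.
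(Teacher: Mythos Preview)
Your proposal is correct and matches the spirit of the paper's treatment: the paper does not actually supply a proof here but simply remarks that the proposition is an immediate consequence of the axioms of monadic quasi-implication algebras together with the induced partial order $\preceq$ of Lemma~\ref{partial order}, deferring details to \cite{mcdonald1}. Your argument is exactly such a direct unpacking---idempotence from axiom~2(a) and antisymmetry, and monotonicity by recognizing axiom~2(c) as join-preservation via the identification $((x\cdot 0)\cdot(y\cdot 0))\cdot x = x\vee y$ (which indeed follows from Lemma~\ref{meets}, De~Morgan, and $(x\cdot 0)\cdot 0 = x$).
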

We now introduce the primary objects of study in this work; namely, the cylindric quasi-implication algebras. 
\begin{definition}\label{I-dimensional cylindric quasi-implication algebra}
    An \emph{I-dimensional cylindric quasi-implication algebra} is an algebraic structure $\langle A;\cdot,0,(\Diamond_i)_{i\in I}, (d_{i,k})_{i,k\in I}\rangle$ satisfying the following conditions: 
    \begin{enumerate}
        \item $\langle A;\cdot,0,\Diamond_i\rangle$ is a monadic quasi-implication algebra for all $i\in I$; 
        \item $\Diamond_i\Diamond_kx=\Diamond_k\Diamond_ix$ for all $i,k\in I$;
        \item $d_{i,k}=d_{k,i}$ and $d_{i,i}=1$ for all $i,k\in I$; 
        \item $i,l\not=k\Rightarrow\Diamond_k(((d_{i,k}\cdot d_{k,l})\cdot(d_{i,k}\cdot 0))\cdot 0)=d_{i,l}$. 
    \end{enumerate}
\end{definition}
The following lemma will be used to simplify the proof of Theorem \ref{alg to imp}.  
\begin{lemma}\label{useful lemma}
    If $A$ is an orthomodular lattice, then $\pi_s(\pi_s(x,y),x^{\perp})^{\perp}=x\wedge y$. 
\end{lemma}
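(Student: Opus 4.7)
The plan is to unfold the definition of $\pi_s$ twice and then exploit the orthomodular law to collapse the resulting expression. Set $a:=\pi_s(x,y)=x^{\perp}\vee(x\wedge y)$, so that $x^{\perp}\leq a$. First I will compute the inner Sasaki hook: since $x^{\perp}\leq a$, we get $a\wedge x^{\perp}=x^{\perp}$, and therefore
\[
\pi_s(a,x^{\perp})=a^{\perp}\vee(a\wedge x^{\perp})=a^{\perp}\vee x^{\perp}.
\]

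Taking orthocomplements and applying De Morgan then yields
\[
\pi_s(\pi_s(x,y),x^{\perp})^{\perp}=(a^{\perp}\vee x^{\perp})^{\perp}=a\wedge x=\bigl(x^{\perp}\vee(x\wedge y)\bigr)\wedge x,
\]
so the remaining task is to identify this last expression with $x\wedge y$.

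For that step I will invoke the following well-known consequence of the orthomodular law: if $c\leq x$ in an orthomodular lattice, then $x\wedge(x^{\perp}\vee c)=c$. (This is just the dual, via De Morgan, of the defining implication $c\leq x\Rightarrow x=c\vee(x\wedge c^{\perp})$ of Definition \ref{oml}, or equivalently a direct application of Proposition \ref{oml characterization}(2).) Applying it with $c=x\wedge y\leq x$ gives $(x^{\perp}\vee(x\wedge y))\wedge x=x\wedge y$, which completes the argument.

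The only potential obstacle is justifying the identity $x\wedge(x^{\perp}\vee c)=c$ for $c\leq x$; everything else is a short symbolic manipulation. Once that lemma is in hand — and it is essentially the orthomodular identity rewritten via orthocomplementation — the proof is a three-line calculation, which is why I would present it inline rather than as a separate sublemma.
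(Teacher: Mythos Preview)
Your proof is correct and essentially the same as the paper's: both unfold $\pi_s$ twice, use the absorption $x^{\perp}\leq a\Rightarrow a\wedge x^{\perp}=x^{\perp}$ (you apply it before De Morgan, the paper after), arrive at $(x^{\perp}\vee(x\wedge y))\wedge x$, and then invoke Proposition~\ref{oml characterization}(2) for the final step. The only difference is cosmetic ordering, not substance.
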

\begin{proof}
    The calculation is straightforward and runs as follows: 
    \begin{align*}
        \pi_s(\pi_s(x,y),x^{\perp})^{\perp}&=\pi_s(x^{\perp}\vee(x\wedge y),x^{\perp})^{\perp}
        \\&=((x^{\perp}\vee(x\wedge y))^{\perp}\vee((x^{\perp}\vee(x\wedge y))\wedge x^{\perp}))^{\perp}
        \\&=(x^{\perp}\vee(x\wedge y))^{\perp\perp}\wedge((x^{\perp}\vee(x\wedge y))\wedge x^{\perp})^{\perp}
        \\&=(x^{\perp}\vee(x\wedge y))\wedge((x^{\perp}\vee(x\wedge y))\wedge x^{\perp})^{\perp}
        \\&=(x^{\perp}\vee(x\wedge y))\wedge x^{\perp\perp}
        \\&=(x^{\perp}\vee(x\wedge y))\wedge x
        \\&=x\wedge y.
    \end{align*}
   Note that the final equality uses the equational formulation of the orthomodularity law described in Proposition \ref{oml characterization}(2). This completes the proof. 
\end{proof}
\begin{theorem}\label{alg to imp}
    $\langle A;\pi_s,0,(\exists)_{i\in I},(\delta_{i,k})_{i,k}\rangle$ is an $I$-dimensional cylindric quasi-implication algebra whenever $A$ is an $I$-dimensional quantum cylindric algebra. 
\end{theorem}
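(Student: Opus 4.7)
The plan is to verify each of the four conditions of Definition \ref{I-dimensional cylindric quasi-implication algebra} in turn, exploiting the fact that $A$ is already an orthomodular lattice so that all quasi-implication operations can be interpreted through $\pi_s$. For condition~(1), I would invoke the analogous single-quantifier correspondence of McDonald \cite{mcdonald1}: since Definition \ref{quantum cylindric algebra} guarantees that for each $i\in I$ the reduct $\langle A;\wedge,\vee,^{\perp},0,1,\exists_i\rangle$ is a quantum monadic algebra, each $\langle A;\pi_s,0,\exists_i\rangle$ is automatically a monadic quasi-implication algebra in the sense of Definition \ref{def of mqia}. Conditions~(2) and~(3) transfer verbatim from conditions~3 and~4(a) of Definition \ref{quantum cylindric algebra}, so no further work is required there.

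The only genuine calculation concerns condition~(4), the cylindric identity
\[\exists_k\bigl(((\delta_{i,k}\cdot\delta_{k,l})\cdot(\delta_{i,k}\cdot 0))\cdot 0\bigr)=\delta_{i,l}\qquad(\text{for }i,l\neq k).\]
Here I would begin by noting that $\pi_s(x,0)=x^{\perp}\vee(x\wedge 0)=x^{\perp}$, so the innermost term $\delta_{i,k}\cdot 0$ collapses to $\delta_{i,k}^{\perp}$; the remaining expression then reads $\pi_s(\pi_s(\delta_{i,k},\delta_{k,l}),\delta_{i,k}^{\perp})^{\perp}$, which Lemma \ref{useful lemma} identifies with $\delta_{i,k}\wedge\delta_{k,l}$. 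Applying $\exists_k$ and invoking condition~4(b) of Definition \ref{quantum cylindric algebra} then yields $\exists_k(\delta_{i,k}\wedge\delta_{k,l})=\delta_{i,l}$ whenever $i,l\neq k$, as required.

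The main obstacle, such as it is, lies not in any deep reasoning but in keeping the quasi-implication and lattice-theoretic notations aligned: one must carefully confirm that the compound polynomial $((x\cdot y)\cdot(x\cdot 0))\cdot 0$ really does denote $x\wedge y$ once $\cdot$ is instantiated as $\pi_s$. This is precisely the purpose of Lemma \ref{useful lemma}, and once that translation is in hand the theorem reduces cleanly to axioms already possessed by the quantum cylindric algebra structure on $A$.
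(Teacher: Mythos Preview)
Your proposal is correct and follows essentially the same route as the paper: reduce condition~(1) to the monadic result of \cite{mcdonald1}, note that conditions~(2) and~(3) are immediate from the quantum cylindric algebra axioms, and verify condition~(4) by using $\pi_s(x,0)=x^{\perp}$ together with Lemma~\ref{useful lemma} to recognize the quasi-implication term as $\delta_{i,k}\wedge\delta_{k,l}$ before applying $\exists_k$. If anything, your treatment is slightly more careful in explicitly dispatching condition~(3), which the paper's proof leaves implicit.
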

\begin{proof}
    It follows by Theorem \ref{oml to qia} that the reduct $\langle A;\pi_s,0\rangle$ is a bounded quasi-implication algebra if $A$ is an orthomodular lattice and it follows by McDonald \cite[Theorem 4.6]{mcdonald1} that the reduct $\langle A;\pi_s,0,\exists\rangle$ is a monadic quasi-implication algebra if $\langle A;\exists\rangle$ is a quantum monadic algebra. Condition 2 of Definition \ref{I-dimensional cylindric quasi-implication algebra} is obvious and hence it remains to verify that condition 3 of Definition \ref{I-dimensional cylindric quasi-implication algebra} is satisfied. Assume that $i,l\not=k$. It suffices to demonstrate the following equation: 
   \begin{equation}\label{a}
\exists_k(\pi_s(\pi_s(\pi_s(\delta_{i,k},\delta_{k,l}),\pi_s(\delta_{i,k},0)),0))=\delta_{i,l}
   \end{equation}
 In doing  so, we first note that $\pi_s(x,0)=x^{\perp}\vee(x\wedge 0)=x^{\perp}\vee0=x^{\perp}$ and hence: 
    \begin{equation}\label{b}
\pi_s(\pi_s(\pi_s(\delta_{i,k},\delta_{k,l}),\pi_s(\delta_{i,k},0)),0)=\pi_s(\pi_s(\delta_{i,k},\delta_{k,l}),\delta_{i,k}^{\perp})^{\perp}
    \end{equation}
Therefore by Equation \ref{a} and Equation \ref{b} it suffices to show that: 
    \begin{equation}\label{c}
\exists_k(\pi_s(\pi_s(\delta_{i,k},\delta_{k,l}),\delta_{i,k}^{\perp})^{\perp})=\delta_{i,l}.
    \end{equation}
    The calculation of Equation \ref{c} is then immediate by Lemma \ref{useful lemma} since: 
    \begin{align*}
\exists_k(\pi_s(\pi_s(\delta_{i,k},\delta_{k,l}),\delta_{i,k}^{\perp})^{\perp})&=\exists_k(\delta_{i,k}\wedge\delta_{k,l})=\delta_{i,l}.
    \end{align*}
    This completes the proof. 
\end{proof} 
\begin{theorem}\label{imp to alg}
   Every $I$-dimensional cylindric quasi-implication algebra can be converted into an $I$-dimensional quantum cylindric algebra.  
\end{theorem}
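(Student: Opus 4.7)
The plan is to invert the construction of Theorem \ref{alg to imp} by reading off an orthomodular lattice structure, a family of quantifiers, and diagonal elements from the given cylindric quasi-implication algebra, and then to check one-by-one the axioms of Definition \ref{quantum cylindric algebra}. Let $A = \langle A; \cdot, 0, (\Diamond_i)_{i\in I}, (d_{i,k})_{i,k\in I}\rangle$ be the given cylindric quasi-implication algebra. First, since $\langle A; \cdot, 0\rangle$ is a bounded quasi-implication algebra, Lemma \ref{quasi-implication algebra is an orthomodular lattice} supplies an orthomodular lattice structure on $A$: take $x^\perp := x\cdot 0$, $1 := x\cdot x$, $x\vee y := ((x\cdot y)\cdot(y\cdot x))\cdot x$, and use Lemma \ref{meets} to shorten the meet to $x\wedge y = ((x\cdot y)\cdot(x\cdot 0))\cdot 0$. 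Define $\exists_i := \Diamond_i$ for each $i\in I$ and take the constants $\delta_{i,k} := d_{i,k}$.

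Next, I would invoke the result of McDonald \cite{mcdonald1} cited in the proof of Theorem \ref{alg to imp}: since $\langle A;\cdot,0,\Diamond_i\rangle$ is a monadic quasi-implication algebra for each $i$, the pair $\langle A; \exists_i\rangle$ is a quantum monadic algebra, so each $\exists_i$ satisfies the quantifier axioms 2(a)--2(e) of Definition \ref{monadic ortholattice} with respect to the orthomodular structure just defined. Condition 2 of Definition \ref{quantum cylindric algebra} is therefore immediate, and condition 3 (pairwise commutativity $\exists_i\exists_k a = \exists_k\exists_i a$) is a direct rewriting of condition 2 of Definition \ref{I-dimensional cylindric quasi-implication algebra}. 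Likewise, half of condition 4, namely $\delta_{i,k} = \delta_{k,i}$ and $\delta_{i,i} = 1$, is exactly condition 3 of Definition \ref{I-dimensional cylindric quasi-implication algebra}.

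The only nontrivial verification is condition 4(b): that $i,l\neq k$ implies $\exists_k(\delta_{i,k}\wedge \delta_{k,l}) = \delta_{i,l}$. This is where I would use Lemma \ref{meets}. Translating the meet into the quasi-implication signature yields
\[
\delta_{i,k} \wedge \delta_{k,l} \;=\; ((d_{i,k}\cdot d_{k,l})\cdot (d_{i,k}\cdot 0))\cdot 0,
\]
so applying $\exists_k = \Diamond_k$ and invoking condition 4 of Definition \ref{I-dimensional cylindric quasi-implication algebra} gives exactly $d_{i,l} = \delta_{i,l}$. This step is the dual of the Lemma \ref{useful lemma} computation used in Theorem \ref{alg to imp}: whereas that theorem had to unfold a long Sasaki-hook expression and simplify it to a meet, here we only need to expand a meet into its quasi-implication form and apply the axiom that was designed with this translation in mind.

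The main obstacle, such as it is, is purely bookkeeping: confirming that the meet formula of Lemma \ref{meets} matches the left-hand side of condition 4 of Definition \ref{I-dimensional cylindric quasi-implication algebra} syntactically, so that no further calculation is needed. Once that alignment is made explicit, all four conditions of Definition \ref{quantum cylindric algebra} have been discharged and $\langle A; \wedge, \vee, ^\perp, 0, 1, (\exists_i)_{i\in I}, (\delta_{i,k})_{i,k\in I}\rangle$ is the desired $I$-dimensional quantum cylindric algebra.
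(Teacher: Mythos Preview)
Your proposal is correct and follows essentially the same route as the paper's own proof: both obtain the orthomodular lattice structure from Lemma \ref{quasi-implication algebra is an orthomodular lattice}, invoke McDonald's result that each $\Diamond_i$ is a quantifier on that lattice, read off conditions 3 and 4(a) of Definition \ref{quantum cylindric algebra} directly from the cylindric quasi-implication axioms, and then use Lemma \ref{meets} to translate the meet so that condition 4 of Definition \ref{I-dimensional cylindric quasi-implication algebra} yields condition 4(b). Your write-up is somewhat more explicit about the term-level definitions and the syntactic alignment with Lemma \ref{meets}, but the argument is the same.
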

\begin{proof}
    It follows by Theorem \ref{quasi-implication algebra is an orthomodular lattice} that every quasi-implication algebra can be converted into an orthomodular lattice and moreover, it follows by McDonald \cite[Theorem 4.7]{mcdonald2} that $\Diamond\colon A\to A$ is a quantifier for any quasi-implication algebra $A$ and hence every monadic quasi-implication algebra can be converted into a quantum monadic algebra. Conditions 3 and 4(a) of Definition \ref{quantum cylindric algebra} follow immediately from conditions 2 and 3 of Definition \ref{I-dimensional cylindric quasi-implication algebra}. By Lemma \ref{meets} together with condition 4 of Definition \ref{I-dimensional cylindric quasi-implication algebra}, we have:   
      \begin{align*}
        \Diamond_k(\inf\{d_{i,k}, d_{k,l}\})=\Diamond_k(((d_{i,k}\cdot d_{k,l})\cdot(d_{i,k}\cdot 0))\cdot 0)=d_{i,l}
    \end{align*}
    This guarantees that $d_{i,k}$ is a diagonal element for every $i,k\in I$ and hence condition 4(b) of Definition \ref{quantum cylindric algebra} is satisfied. This completes the proof.  
\end{proof}
\begin{definition}\label{alg homo}
    Let $A$ and $A'$ be $I$-dimensional quantum cylindric algebras. A function $h\colon A\to A'$ is a \emph{homomorphism} provided: 
    \par
    \vspace{-.3cm}
    \begin{multicols}{2}
    \begin{enumerate}
        \item $h(x\wedge y)=h(x)\wedge h(y)$; 
        \item $h(x\vee y)=h(x)\vee h(y)$; 
        \item $h(x^{\perp})=h(x)^{\perp}$; 
        \item $h(0)=0'$, $h(1)=1'$;
        \item $h(\exists_ix)=\exists_ih(x)$;
        \item $h(\delta_{i,k})=\delta_{i,k}'$.
    \end{enumerate}
     \end{multicols}
\end{definition}
\begin{definition}\label{imp homo}
    Let $A$ and $A'$ be $I$-dimensional cylindric quasi-implication algebras. A function $h\colon A\to A'$ is a \emph{homomorphism} provided: 
    \begin{enumerate}
        \item $h(x\cdot y)=h(x)\cdot h(y)$; 
        \item $h(\Diamond_ix)=\Diamond_ih(x)$; 
        \item $h(d_{i,k})=d_{i,k}'$;
          \item $h(0)=0'$. 
    \end{enumerate}
\end{definition}
\begin{definition}
    By $\mathbf{QCA}$ we denote the category of $I$-dimensional quantum cylindric algebras and their associated homomorphisms. By $\mathbf{CQIA}$ we denote the category of $I$-dimensional cylindric quasi-implication algebras and their associated homomorphisms. 
\end{definition}
The proceeding lemmas follow immediately from Theorem \ref{alg to imp}, Theorem \ref{imp to alg}, Definition \ref{alg homo}, and Definition \ref{imp homo}. 
\begin{lemma}\label{homo lemma1}
    Let $A$ and $A'$ be I-dimensional quantum cylindric algebras and let $B$ and $B'$ be the corresponding $I$-dimensional cylindric quasi-implication algebras obtained from $A$ and $A'$ respectively via Theorem \ref{alg to imp}. Then $h\colon B\to B'$ is a homomorphism if $h\colon A\to A'$ is a homomorphism.  
\end{lemma}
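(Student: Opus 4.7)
The plan is to verify each of the four clauses of Definition \ref{imp homo} for the map $h\colon B\to B'$. By Theorem \ref{alg to imp}, the CQIA operations on $B$ are defined from the QCA operations on $A$ via $x\cdot y := \pi_s(x,y) = x^{\perp}\vee(x\wedge y)$, $\Diamond_i := \exists_i$, and $d_{i,k} := \delta_{i,k}$, and likewise for $B'$. So the strategy is simply to unfold these definitions and appeal to the corresponding QCA homomorphism conditions from Definition \ref{alg homo}.

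Three of the four CQIA homomorphism conditions are immediate translations. Preservation of $\Diamond_i$ follows from clause (5) of Definition \ref{alg homo}, preservation of $d_{i,k}$ follows from clause (6), and preservation of $0$ follows from clause (4).

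The only condition requiring a brief calculation is clause (1), preservation of $\cdot$. For this I would unpack the Sasaki hook and apply clauses (1), (2), (3) of Definition \ref{alg homo}:
\begin{align*}
h(x\cdot y) &= h(x^{\perp}\vee(x\wedge y))\\
&= h(x)^{\perp}\vee(h(x)\wedge h(y))\\
&= \pi_s(h(x),h(y))\\
&= h(x)\cdot h(y).
\end{align*}
There is no genuine obstacle here; the content of the lemma is that $\pi_s$ is an ortholattice term, so any map preserving $\wedge$, $\vee$, and $^{\perp}$ automatically preserves $\cdot$, and the remaining CQIA-specific symbols $\Diamond_i$, $d_{i,k}$, $0$ are shared between the two signatures by construction.
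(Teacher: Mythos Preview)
Your proof is correct and follows exactly the approach the paper intends: the paper simply states that this lemma follows immediately from Theorem \ref{alg to imp}, Theorem \ref{imp to alg}, Definition \ref{alg homo}, and Definition \ref{imp homo}, without spelling out the verification. Your unpacking of the Sasaki hook and direct appeal to the relevant clauses of Definition \ref{alg homo} is precisely what makes that immediacy explicit.
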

\begin{lemma}\label{homo lemma2}
    Let $A$ and $A'$ be I-dimensional cylindric quasi-implication algebras and let $B$ and $B'$ be the corresponding $I$-dimensional quantum cylindric algebras obtained from $A$ and $A'$ respectively via Theorem \ref{imp to alg}. Then $h\colon B\to B'$ is a homomorphism if $h\colon A\to A'$ is a homomorphism. 
\end{lemma}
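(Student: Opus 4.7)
The plan is to unpack the ortholattice structure on $B$ as term operations in the underlying quasi-implication operation of $A$, and then to verify that preservation of $\cdot$, $\Diamond_i$, $d_{i,k}$, and $0$ automatically propagates to preservation of every operation listed in Definition \ref{alg homo}. Concretely, by Lemma \ref{quasi-implication algebra is an orthomodular lattice} together with Lemma \ref{meets}, the ortholattice reduct of $B$ is term-definable in $A$ via
\begin{align*}
x^{\perp} &= x \cdot 0, \\
x \vee y &= ((x \cdot y) \cdot (y \cdot x)) \cdot x, \\
x \wedge y &= ((x \cdot y) \cdot (x \cdot 0)) \cdot 0,
\end{align*}
with the top element given by $1 = x \cdot x$ (well-defined by Lemma \ref{lemma}(3)), while the cylindric structure is taken verbatim from $A$: that is, $\exists_i := \Diamond_i$ and $\delta_{i,k} := d_{i,k}$. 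The same applies to $B'$ relative to $A'$.

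Given that $h \colon A \to A'$ satisfies the four conditions of Definition \ref{imp homo}, the six conditions of Definition \ref{alg homo} follow by direct substitution. For the bounds, $h(0) = 0'$ holds by assumption, and $h(1) = h(x \cdot x) = h(x) \cdot h(x) = 1'$ by preservation of $\cdot$. Orthocomplementation reduces to $h(x^{\perp}) = h(x \cdot 0) = h(x) \cdot 0' = h(x)^{\perp}$, and the join and meet conditions reduce to iterated applications of $h(a \cdot b) = h(a) \cdot h(b)$ on the defining terms above. Finally, preservation of the quantifiers and diagonals is immediate: $h(\exists_i x) = h(\Diamond_i x) = \Diamond_i h(x) = \exists_i h(x)$ and $h(\delta_{i,k}) = h(d_{i,k}) = d_{i,k}' = \delta_{i,k}'$.

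Since the verification consists entirely of substituting term definitions and applying the assumed preservation of the generating operations, no step is genuinely obstructive. The only care required is to invoke Lemma \ref{meets} for the meet, rather than the more cumbersome formula of Lemma \ref{quasi-implication algebra is an orthomodular lattice}(3), so that preservation of $\cdot$ and $0$ alone already suffices to handle $\wedge$. This is precisely why the statement is presented as an immediate consequence of Theorem \ref{imp to alg}, Definition \ref{alg homo}, and Definition \ref{imp homo}.
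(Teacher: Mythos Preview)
Your argument is correct and is precisely the intended elaboration of the paper's remark that the lemma follows immediately from Theorem \ref{imp to alg}, Definition \ref{alg homo}, and Definition \ref{imp homo}: all $B$-operations are term-definable from $\cdot$, $0$, $\Diamond_i$, and $d_{i,k}$, so preservation of those generators forces preservation of the full quantum cylindric signature. One minor quibble: the longer meet formula in Lemma \ref{quasi-implication algebra is an orthomodular lattice}(3) is also built solely from $\cdot$ and $0$, so invoking Lemma \ref{meets} is a convenience rather than a necessity.
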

\begin{theorem}
    $\mathbf{QCA}$ is isomorphic to $\mathbf{CQIA}$. 
\end{theorem}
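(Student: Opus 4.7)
The plan is to define two functors $F\colon\mathbf{QCA}\to\mathbf{CQIA}$ and $G\colon\mathbf{CQIA}\to\mathbf{QCA}$ and to verify that they are mutually inverse. On objects, $F$ is given by Theorem \ref{alg to imp}, sending a quantum cylindric algebra $A$ to $\langle A;\pi_s,0,(\exists_i)_{i\in I},(\delta_{i,k})_{i,k\in I}\rangle$; and $G$ is given by Theorem \ref{imp to alg}, sending a cylindric quasi-implication algebra $B$ to the quantum cylindric algebra whose ortholattice operations are recovered via Lemma \ref{quasi-implication algebra is an orthomodular lattice} and Lemma \ref{meets}, while keeping the quantifiers $\Diamond_i$ as $\exists_i$ and the constants $d_{i,k}$ as $\delta_{i,k}$. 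On morphisms, both $F$ and $G$ act as the identity on the underlying functions; this is well-defined by Lemma \ref{homo lemma1} and Lemma \ref{homo lemma2}, respectively. Functoriality (preservation of identities and composition) is then immediate, since composition of underlying functions is preserved.

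The central task is to verify the object-level identities $GF=\mathrm{id}_{\mathbf{QCA}}$ and $FG=\mathrm{id}_{\mathbf{CQIA}}$. For $GF$, starting from a quantum cylindric algebra $A$ and forming the Sasaki hook $\pi_s$, one must check that the ortholattice operations recovered from $\pi_s$ via Lemma \ref{quasi-implication algebra is an orthomodular lattice} agree with the original ones. The orthocomplementation is immediate since $\pi_s(x,0)=x^{\perp}\vee(x\wedge 0)=x^{\perp}$; the join formula $((x\cdot y)\cdot(y\cdot x))\cdot x=x\vee y$ is the standard Sasaki-hook identity in orthomodular lattices established by Hardegree \cite{hardegree1}; the meet formula then follows from Lemma \ref{meets}, or alternatively directly from Lemma \ref{useful lemma} which yields $\pi_s(\pi_s(x,y),x^{\perp})^{\perp}=x\wedge y$. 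Since the quantifiers and diagonal elements are carried through unchanged, we conclude $GF(A)=A$.

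For $FG$, starting from a cylindric quasi-implication algebra $B$ and passing through the orthomodular-lattice reduct, one must check that the derived Sasaki hook of the recovered ortholattice coincides with the original operation $\cdot$; that is, $\pi_s(x,y)=x^{\perp}\vee(x\wedge y)=x\cdot y$, where the right-hand-side operations are computed via the formulas of Lemma \ref{quasi-implication algebra is an orthomodular lattice} and Lemma \ref{meets}. This is precisely the content of Hardegree's correspondence \cite{hardegree1} between bounded quasi-implication algebras and orthomodular lattices (and is also implicit in the proof of Theorem \ref{alg to imp} above). Again the quantifiers $\Diamond_i$ and constants $d_{i,k}$ are transported identically, so $FG(B)=B$.

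The main obstacle is not conceptual but bookkeeping: one must be careful that the reconstruction of $\wedge$, $\vee$, and $^{\perp}$ from the quasi-implication operation is literally the one obtained by applying Lemma \ref{quasi-implication algebra is an orthomodular lattice} and that no additional structure is introduced or lost. Once these object-level identities are verified, and combined with the fact that both functors fix morphisms, the two categories are isomorphic via $F$ and $G$.
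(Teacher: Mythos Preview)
Your proposal is correct and follows essentially the same approach as the paper: define mutually inverse constructions on objects via Theorems~\ref{alg to imp} and~\ref{imp to alg}, handle morphisms via Lemmas~\ref{homo lemma1} and~\ref{homo lemma2}, and verify the round-trip identities. The only difference is one of presentation: the paper delegates the round-trip verification to the monadic case \cite[Theorems~4.8 and~4.9]{mcdonald1} and then observes that the diagonals are carried through unchanged, whereas you unpack the verification directly at the level of the bounded quasi-implication algebra / orthomodular lattice correspondence by appealing to Hardegree's results and Lemma~\ref{useful lemma}.
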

\begin{proof}
Let $A$ be an $I$-dimensional cylindric quasi-implication algebra, let $B$ be the corresponding $I$-dimensional quantum cylindric algebra obtained from $A$ via Theorem \ref{imp to alg}, and let $\widehat{A}$ be the corresponding $I$-dimensional cylindric quasi-implication algebra obtained from $B$ via Theorem \ref{alg to imp}. Since the diagonal $d_{i,k}$ associated with $A$ is clearly identical with the diagonal $\widehat{d_{i,k}}$ associated with $\widehat{A}$ it follows by \cite[Theorem 4.9]{mcdonald2} that $A=\widehat{A}$. Now conversely, let $A$ be an $I$-dimensional quantum cylindric algebra, let $B$ be the corresponding $I$-dimensional cylindric quasi-implication algebra obtained from $A$ via Theorem \ref{alg to imp}, and let $\widehat{A}$ be the corresponding $I$-dimensional quantum cylindric algebra obtained from $B$ via Theorem \ref{imp to alg}. Since the diagonal $\delta_{i,k}$ associated with $A$ is clearly identical with the diagonal $\widehat{\delta_{i,k}}$ associated with $\widehat{A}$ it follows by \cite[Theorem 4.8]{mcdonald2} that $A=\widehat{A}$. Lemma \ref{homo lemma1} together with Lemma \ref{homo lemma2} then guarantees that $\mathbf{QCA}$ is isomorphic to $\mathbf{CQIA}$.  
\end{proof}
\section{Cylindric orthoframes}
In this section, we give two alternative constructions of a cylindric orthoframe from a cylindric quasi-implication algebra.     
\begin{definition}\label{of}
     An \textit{orthoframe} is pair $\langle X,\perp\rangle$ such that $X$ is a set and $\perp\subseteq X^2$ is an orthogonality relation, i.e., $\perp$ is irreflexive and symmetric. Moreover, for any orthoframe $X$ and any subset $U\subseteq X$, define: \[U^{\perp}=\{x\in X: x\perp U\}=\{x\in X:x\perp y\hspace{.2cm}\text{for all}\hspace{.2cm}y\in U\}.\]   
\end{definition}
   The following relational structures were introduced by Harding \cite{harding}. 
\begin{definition}\label{mof}
    A \emph{monadic orthoframe} is a triple $\langle X;\perp,R\rangle$ such that: 
    \begin{enumerate}
        \item $\langle X;\perp\rangle$ is an orthoframe;
        \item $R$ is a binary relation on $X$ that is reflexive and transitive; 
        \item $R[R[\{x\}]^{\perp}]\subseteq R[\{x\}]^{\perp}$ for all $x\in X$.  
    \end{enumerate}
\end{definition}
It is easy to see that the reflexivity of $R$ gives $R[R[\{x\}]^{\perp}]=R[\{x\}]^{\perp}$. 

\begin{definition}\label{col}
An \emph{I-dimensional cylindric orthoframe} is a relational structure $\langle X;\perp,(R_{i})_{i\in I},(\Delta_{i,k})_{i,k\in I}\rangle$ such that for all $i,k,l\in I$:   
    \begin{enumerate}
        \item $\langle X;\perp,R_{i}\rangle$ is a monadic orthoframe; 
        \item $R_i$ commutes with $R_k$;
        \item $\Delta_{i,k}\subseteq X$ with $\Delta_{i,k}=\Delta_{k,i}=\Delta^{\perp\perp}_{i,k}$ and $\Delta_{i,i}=X$;
        \item if $i,l\not=k$, then $R_k[\Delta_{i,k}\cap \Delta_{k,l}]=\Delta_{i,l}$. 
    \end{enumerate}
\end{definition}
Orthoframes, monadic orthoframes, and $I$-dimensional cylindric orthoframes play an important role in the set-theoretic, algebraic, and topological representation theory of ortholattices, monadic ortholattices, and $I$-dimensional cylindric ortholattices, respectively (consult \cite{goldblatt, harding, harding3, mcdonald2} for more details).  

Throughout the remainder of this work, for any cylindric quasi-implication algebra $A$, let $A\setminus\{0\}=\{x\in A:x\not=0\}$ and let $\psi(x)=\{y\in A\setminus\{0\}:y\cdot x=1\}$. 

We now proceed by constructing an $I$-dimensional cylindric orthoframe from the non-zero elements of an $I$-dimensional cylindric quasi-implication algebra. This generalizes the construction given by Harding \cite{harding} in the setting of cylindric ortholattices from the perspective of MacNeille completions.  
\begin{definition}
    Let $A$ be an $I$-dimensional cylindric quasi-implication algebra. The \emph{I-dimensional MacLaren frame} induced by $A$ is a relational structure $X^M_A=\langle A\setminus\{0\};\perp^M_A,(R^M_{i})_{i\in I},(\Delta^M_{i,k})_{i,k\in I}\rangle$ such that: 
    \begin{enumerate}
        \item $\perp^M\subseteq A\setminus\{0\}\times A\setminus\{0\}$ is defined by $x\perp^My\Longleftrightarrow x\cdot (y\cdot0)=1$; 
        \item $R^M_i\subseteq A\setminus\{0\}\times A\setminus\{0\}$ is defined by $xR_i^My\Longleftrightarrow y\cdot\Diamond_i x=1$; 
        \item $\Delta^M_{i,k}\subseteq A\setminus\{0\}$ is defined by $\Delta_{i,k}=\psi(d_{i,k})$. 
    \end{enumerate}
\end{definition}
The following lemmas will be used in the proof of Theorem \ref{algebra to maclaren frame}. 
\begin{lemma}\label{mof lemma1}
    Let $A$ be an I-dimensional cylindric quasi-implication algebra and let $X^M_A$ be the I-dimensional cylindric MacLaren frame induced by $A$. Then: 
    \begin{enumerate}
        \item $R^M_i[\{x\}]^{\perp}=\{y\in A\setminus \{0\}:y\cdot (\Diamond x\cdot 0)=1\}$;
        \item $z\cdot\Diamond_i(\Diamond_i x\cdot 0)=1$ for all $z\in R_i^M[R_i^M[\{x\}]^{\perp}]$.
    \end{enumerate}
\begin{proof}
    The proof of part 1 immediately follows from \cite[Lemma 5.5]{mcdonald2} and the proof of part 2 immediately follows from \cite[Lemma 5.6]{mcdonald2}. 
\end{proof}
\end{lemma}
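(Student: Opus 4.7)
My plan is to pass through the isomorphism $\mathbf{CQIA}\cong\mathbf{QCA}$ established above and argue inside the induced orthomodular lattice, where the two identities become one-line lattice computations. Using Lemma \ref{partial order} and Lemma \ref{quasi-implication algebra is an orthomodular lattice}, the translation dictionary is $x\cdot y=1\iff x\preceq y$ and $x\cdot 0=x^{\perp}$, so that $x\perp^M y\iff x\leq y^{\perp}$ and $xR_i^My\iff y\leq\Diamond_i x$, with $\Diamond_i$ acting as a quantifier on the induced orthomodular lattice (monotone and idempotent by Proposition \ref{idempotent}, increasing by condition 2(a) of Definition \ref{def of mqia}, and fixing orthocomplements of $\Diamond_i$-closed elements by condition 2(b)).

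For part 1, I would unfold the definitions to $R_i^M[\{x\}]=\{y\in A\setminus\{0\}:y\leq\Diamond_i x\}$ and then verify two inclusions. The forward direction uses that $\Diamond_i x$ itself lies in $R_i^M[\{x\}]$ (since $x\leq\Diamond_i x$), so any element of $R_i^M[\{x\}]^{\perp}$ must in particular be $\leq(\Diamond_i x)^{\perp}$. The reverse direction uses order-reversal of $^{\perp}$ on the down-set $R_i^M[\{x\}]$: if $y\leq(\Diamond_i x)^{\perp}$ and $z\leq\Diamond_i x$, then $y\leq z^{\perp}$, hence $y\perp^M z$. Retranslating gives exactly $\{y\in A\setminus\{0\}:y\cdot(\Diamond_i x\cdot 0)=1\}$.

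For part 2, given $z\in R_i^M[R_i^M[\{x\}]^{\perp}]$, there exists some $w\in R_i^M[\{x\}]^{\perp}$ with $z\leq\Diamond_i w$. Part 1 yields $w\leq(\Diamond_i x)^{\perp}$, and monotonicity of $\Diamond_i$ gives $z\leq\Diamond_i w\leq\Diamond_i((\Diamond_i x)^{\perp})$, which retranslates to $z\cdot\Diamond_i(\Diamond_i x\cdot 0)=1$. The main obstacle is really only the bookkeeping in moving between the quasi-implication and lattice formulations: once the dictionary is fixed, no nontrivial identity beyond order-reversal of $^{\perp}$ and monotonicity of $\Diamond_i$ enters. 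Neither the diagonal elements nor the commuting quantifiers $\Diamond_k$ for $k\neq i$ play any role in this lemma, so the result is genuinely a coordinatewise reduction to the monadic case, which is consistent with the author's appeal to the corresponding lemmas in \cite{mcdonald2}.
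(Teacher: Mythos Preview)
Your proposal is correct. The paper's own proof is nothing more than a citation to two lemmas in an external reference, so there is no internal argument to compare against; what you have written is a clean, self-contained proof that spells out exactly the monadic computation those external lemmas are meant to supply. Your translation dictionary is accurate (in the induced orthomodular lattice $x\cdot y=1\Leftrightarrow x\leq y$, $x\cdot 0=x^{\perp}$, hence $x\perp^M y\Leftrightarrow x\leq y^{\perp}$ and $xR_i^My\Leftrightarrow y\leq\Diamond_i x$), and both parts then reduce to the order-theoretic facts you name: for part~1, the key step that $\Diamond_i x\in R_i^M[\{x\}]$ (with $\Diamond_i x\neq 0$ following from $0\neq x\leq\Diamond_i x$) gives the forward inclusion, and order-reversal of $^{\perp}$ gives the reverse; for part~2, monotonicity of $\Diamond_i$ applied to $w\leq(\Diamond_i x)^{\perp}$ does the work. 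Your closing observation that neither the diagonals nor the other quantifiers $\Diamond_k$ are used is exactly right and matches the paper's reduction to the monadic case.
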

\begin{lemma}\label{mof lemma2}
    Let $X$ be an $I$-dimensional cylindric quasi-implication algebra and let $X^M_A$ be the $I$-dimensional cylindric MacLaren frame induced by $A$. Then: 
    \begin{enumerate}
        \item $\psi(x)\cap\psi(y)=\psi(((x\cdot y)\cdot(x\cdot 0))\cdot 0)$; 
        \item $\psi(x\cdot 0)=\psi(x)^{\perp}$; 
        \item $\Delta^M_{i,k}=\Delta^M_{k,i}$; 
        \item $\Delta^M_{i,i}=A\setminus\{0\}$. 
    \end{enumerate}
\end{lemma}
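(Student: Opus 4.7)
The plan is to reduce each of the four parts to the fact that $\psi(x)$ is precisely the set of non-zero elements lying at or below $x$ in the induced partial order $\preceq$ from Lemma \ref{partial order}, and then to translate the claims into statements about meets, orthocomplements, and the diagonal elements via Lemma \ref{meets} and Lemma \ref{quasi-implication algebra is an orthomodular lattice}.

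For part 1, I would first invoke Lemma \ref{meets} to identify the element $((x\cdot y)\cdot(x\cdot 0))\cdot 0$ as the greatest lower bound of $\{x,y\}$ in the induced orthomodular lattice, so that $\psi(((x\cdot y)\cdot(x\cdot 0))\cdot 0)=\psi(x\wedge y)$. Since $\preceq$ is the lattice order, $z\preceq x\wedge y$ iff $z\preceq x$ and $z\preceq y$, which gives $\psi(x\wedge y)=\psi(x)\cap\psi(y)$ directly from the definition of $\psi$.

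For part 2, I would use Lemma \ref{quasi-implication algebra is an orthomodular lattice}(1) to recall that $x\cdot 0$ is the orthocomplement $x^{\perp}$, so $\psi(x\cdot 0)=\{y\in A\setminus\{0\}:y\preceq x^{\perp}\}$. For the inclusion $\psi(x\cdot 0)\subseteq\psi(x)^{\perp}$: if $y\preceq x^{\perp}$ and $z\in\psi(x)$, then $z\preceq x$, so by order-reversal of orthocomplementation we get $y\preceq x^{\perp}\preceq z^{\perp}$, which in the implicational presentation reads $y\cdot(z\cdot 0)=1$, i.e., $y\perp^M z$. For the reverse inclusion: provided $x\neq 0$, we have $x\in\psi(x)$ (since $x\cdot x=1$), so any $y\in\psi(x)^{\perp}$ satisfies $y\perp^M x$, meaning $y\cdot(x\cdot 0)=1$, as required; the degenerate case $x=0$ is handled separately by noting that $\psi(0)=\emptyset$ forces both $\psi(0)^{\perp}$ and $\psi(0\cdot 0)=\psi(1)$ to equal $A\setminus\{0\}$ (the latter using Proposition \ref{top element lemma}). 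The main thing to be careful about here is this edge case.

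Parts 3 and 4 are then immediate from the axioms in Definition \ref{I-dimensional cylindric quasi-implication algebra}: the symmetry $d_{i,k}=d_{k,i}$ yields $\Delta^M_{i,k}=\psi(d_{i,k})=\psi(d_{k,i})=\Delta^M_{k,i}$, while $d_{i,i}=1$ combined with $x\cdot 1=1$ (Proposition \ref{top element lemma}) gives $\Delta^M_{i,i}=\psi(1)=A\setminus\{0\}$. I do not anticipate any serious obstacle; the only subtlety is the $\supseteq$ direction of part 2, which is resolved cleanly by using $x$ itself as a test element and separately treating $x=0$.
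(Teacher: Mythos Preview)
Your proposal is correct and follows essentially the same route as the paper: both arguments identify $((x\cdot y)\cdot(x\cdot 0))\cdot 0$ as the greatest lower bound via Lemma~\ref{meets} for part~1, both use the test element $x\in\psi(x)$ for the $\supseteq$ direction of part~2, and both dispatch parts~3 and~4 directly from the axioms and Proposition~\ref{top element lemma}. The only difference is presentational: you first pass to the induced orthomodular lattice and reason with $\wedge$ and $^{\perp}$, whereas the paper stays in the implicational language throughout. In two places you are in fact slightly more careful than the paper: your $\subseteq$ direction of part~2 verifies $y\perp^M z$ for every $z\in\psi(x)$ (the paper only checks $y\perp^M x$ and then asserts the conclusion), and you explicitly handle the degenerate case $x=0$, which the paper's argument silently skips when it invokes $x\in\psi(x)$.
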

\begin{proof}
    For part 1, we first demonstrate the $\psi(((x\cdot y)\cdot(x\cdot 0))\cdot 0)\subseteq\psi(x)\cap\phi(y)$ inclusion. Assume that $z\in \psi(((x\cdot y)\cdot(x\cdot 0))\cdot 0)$ so that $z\cdot (((x\cdot y)\cdot(x\cdot 0))\cdot 0)=1$. By Lemma \ref{partial order} combined with Lemma \ref{meets}, it follows that: 
   \begin{equation}\label{equation1}
       (((x\cdot y)\cdot(x\cdot 0))\cdot0)\cdot x=(((x\cdot y)\cdot(x\cdot 0))\cdot0)\cdot y=1. 
   \end{equation}
   By our hypothesis and Equation \ref{equation1}, we have $z\cdot x=z\cdot y=1$ and hence $z\in\psi(x)$ and $z\in\psi(y)$ so $z\in\psi(x)\cap\psi(y)$. For the $\psi(x)\cap\psi(y)\subseteq\psi(((x\cdot y)\cdot(x\cdot 0))\cdot 0)$ inclusion, assume $z\in\psi(x)\cap\psi(y)$ so that $x\in\psi(x)$ and $z\in\psi(y)$. The definition of $\psi$ then yields $z\cdot x=z\cdot y=1$. Since: 
   \[((x\cdot y)\cdot(x\cdot 0))\cdot0=\max\{w\in A:w\cdot x=1\hspace{.2cm}\text{and}\hspace{.2cm}w\cdot y=1\}\] we have that $z\cdot(((x\cdot y)\cdot(x\cdot 0))\cdot 0)=1$ and hence $z\in\psi(((x\cdot y)\cdot(x\cdot 0))\cdot 0)$. 

   For part 2, assume that $y\in\psi(x\cdot 0)$ so that $y\cdot(x\cdot 0)=1$. Then $y\perp^Mx$ by the definition of $\perp^M$ and hence $y\in\psi(x)^{\perp}$. Now assume that $y\in\psi(x)^{\perp}$. Then $y\cdot (z\cdot 0)=1$ for every $z\in A\setminus\{0\}$ such that $z\cdot x=1$. Clearly $x\cdot x=1$ by Definition \ref{top element} and hence $y\cdot(x\cdot 0)=1$ and thus $y\in\psi(x\cdot 0)$, as desired.

   Conditions 3 and 4 follow immediately by the Definition of $\Delta^M_{i,k}$, $\Delta^M_{k,i}$, Definition \ref{I-dimensional cylindric quasi-implication algebra}(3), and Proposition \ref{top element lemma} since $\Delta^M_{i,k}=\psi(d_{i,k})=\psi(d_{k,i})=\Delta^M_{k,i}$ and: \[\Delta^M_{i,i}=\psi(d_{i,i})=\psi(1)=\{x\in A\setminus\{0\}:x\cdot 1=1\}=A\setminus\{0\}.\] This completes the proof.   \end{proof}
\begin{theorem}\label{algebra to maclaren frame}
    If $A$ is an $I$-dimensional cylindric quasi-implication algebra, then its MacLaren frame $X^M_A$ is an $I$-dimensional cylindric orthoframe. 
\end{theorem}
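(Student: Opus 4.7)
The plan is to verify, one by one, the four clauses of Definition \ref{col} that characterise an $I$-dimensional cylindric orthoframe, exploiting the fact that by Theorem \ref{imp to alg} we may freely read off lattice-theoretic facts about $A$ from the induced quantum cylindric algebra, and using Lemmas \ref{mof lemma1} and \ref{mof lemma2} to translate between the algebraic vocabulary ($\cdot,\Diamond_i,d_{i,k}$) and the frame vocabulary ($\perp^M,R_i^M,\Delta_{i,k}^M$).

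For clause (1), I would verify that $\langle A\setminus\{0\};\perp^M,R_i^M\rangle$ is a monadic orthoframe for each fixed $i\in I$. This is essentially the monadic case handled in \cite{mcdonald2}: irreflexivity of $\perp^M$ is because $x\cdot(x\cdot 0)=1$ would force $x\leq x^{\perp}$, i.e.\ $x=0$; symmetry follows from involutivity of $x\mapsto x\cdot 0$; reflexivity of $R_i^M$ is immediate from the axiom $x\cdot\Diamond_i x=1$; transitivity uses monotonicity and idempotence of $\Diamond_i$ (Proposition \ref{idempotent}); and the condition $R_i^M[R_i^M[\{x\}]^{\perp}]\subseteq R_i^M[\{x\}]^{\perp}$ is a restatement, via Lemma \ref{mof lemma1}, of the closure axiom $\Diamond_i(\Diamond_i x\cdot 0)=\Diamond_i x\cdot 0$.

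For clauses (2) and (3), commutativity $R_i^M\circ R_k^M=R_k^M\circ R_i^M$ comes down to unwinding the definition: $x(R_i^M\circ R_k^M)y$ amounts to the existence of a non-zero $z$ with $z\leq\Diamond_i x$ and $y\leq\Diamond_k z$, and the argument closes using $\Diamond_i\Diamond_k=\Diamond_k\Diamond_i$ (condition 2 of Definition \ref{I-dimensional cylindric quasi-implication algebra}) together with the monotonicity of each $\Diamond_j$. The properties $\Delta_{i,k}^M=\Delta_{k,i}^M$ and $\Delta_{i,i}^M=A\setminus\{0\}$ are exactly parts (3) and (4) of Lemma \ref{mof lemma2}; and $\Delta_{i,k}^M=\Delta_{i,k}^{M\perp\perp}$ follows from part (2) of the same lemma applied twice, combined with the involutivity identity $(d_{i,k}\cdot 0)\cdot 0=d_{i,k}$ inherited from the underlying orthomodular lattice via Lemma \ref{quasi-implication algebra is an orthomodular lattice}.

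The main obstacle is clause (4), the cylindric diagonal identity $R_k^M[\Delta_{i,k}^M\cap\Delta_{k,l}^M]=\Delta_{i,l}^M$ for $i,l\neq k$. The strategy is to rewrite the left-hand side using Lemma \ref{mof lemma2}(1) and Lemma \ref{meets} as $R_k^M[\psi(d_{i,k}\wedge d_{k,l})]$, so that the claim becomes $R_k^M[\psi(d_{i,k}\wedge d_{k,l})]=\psi(d_{i,l})$. For the forward inclusion, if $z\in\psi(d_{i,k}\wedge d_{k,l})$ and $zR_k^My$, then $y\leq\Diamond_k z\leq\Diamond_k(d_{i,k}\wedge d_{k,l})=d_{i,l}$ by monotonicity of $\Diamond_k$ together with the algebraic diagonal identity (condition 4 of Definition \ref{I-dimensional cylindric quasi-implication algebra}), placing $y$ in $\psi(d_{i,l})$. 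For the reverse inclusion, given non-zero $y\leq d_{i,l}$, I would take $z:=d_{i,k}\wedge d_{k,l}$ itself; this $z$ must be non-zero (otherwise $d_{i,l}=\Diamond_k 0=0$ would force $y=0$), satisfies $z\in\psi(d_{i,k}\wedge d_{k,l})$ trivially, and gives $y\leq d_{i,l}=\Diamond_k z$, hence $zR_k^My$. Assembling these verifications yields that $X^M_A$ satisfies all four clauses of Definition \ref{col}, completing the proof.
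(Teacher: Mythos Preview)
Your proof is correct and follows essentially the same route as the paper's: reduce clause (1) to the monadic case, verify commutativity of the $R_i^M$ directly from $\Diamond_i\Diamond_k=\Diamond_k\Diamond_i$ and monotonicity, read off the diagonal-set properties from Lemma \ref{mof lemma2}, and handle clause (4) by rewriting $\Delta_{i,k}^M\cap\Delta_{k,l}^M$ as $\psi$ of the meet and choosing the witness $z=d_{i,k}\wedge d_{k,l}$ for the reverse inclusion. Your argument is in fact slightly more careful than the paper's in that you explicitly check this witness is non-zero (via $d_{i,l}=\Diamond_k z$, so $z=0$ would make $\psi(d_{i,l})$ empty), a point the paper leaves implicit; the only slip is that the monadic MacLaren-frame result you invoke is in \cite{mcdonald1}, not \cite{mcdonald2}.
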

\begin{proof}
    It follows by McDonald \cite[Theorem 5.7]{mcdonald1} by way of Lemma \ref{mof lemma1} that the reduct $\langle\mathfrak{F}(A);\perp^M,R^M\rangle$ of $X^M_A$ is a monadic orthoframe whenever $A$ is a monadic quasi-implication algebra and hence we first verify that $R^M_i\circ R^M_k=R^M_k\circ R^M_i$ for all $i,k\in I$. Assume that $xR^M_iy$ and $yR^M_kz$. The definition of $R^M_i$ and $R^M_k$ gives $y\cdot\Diamond_ix=z\cdot\Diamond_ky=1$ so in particular, we have $z\cdot\Diamond_k\Diamond_ix=z\cdot\Diamond_i\Diamond_kx=1$. If we set $w=\Diamond_kx$, then $xR^M_kw$ and $wR^M_iz$ so $R^M_i\circ R^M_k\subseteq R^M_k\circ R^M_i$. The converse inclusion follows by a symmetric argument. 

    For condition 3 of Definition \ref{col}, notice that by Lemma \ref{mof lemma2}(2), we have: \[\Delta^{M\perp\perp}_{i,k}=\psi(d_{i,k})=\psi((d_{i,k}\cdot 0)\cdot 0)=\psi(d_{i,k}\cdot 0)^{\perp}=\psi(d_{i,k})^{\perp\perp}=\Delta_{i,k}^{M\perp\perp}.\] The remainder of condition 3 follows by Lemma \ref{mof lemma2}(3) and Lemma \ref{mof lemma2}(4).

    For condition 4, it remains to verify that $R^M_k[\Delta^M_{i,k}\cap\Delta^M_{k,l}]=\Delta_{i,l}$ whenever $i,l\not=k$. By the definition of $\Delta^M_{i,k}$ and $\Delta^M_{k,l}$:
   \begin{equation}\label{1}
       R^M_{k}[\Delta^M_{i,k}\cap \Delta^M_{k,l}]=R^M_{k}[\psi(\delta^M_{i,k})\cap\psi(d_{k,l})]
   \end{equation}
   By condition 1 of Lemma \ref{mof lemma2} we have:
   \begin{equation}\label{2}
\psi(d_{i,k})\cap\psi(d_{k,l})=\psi(((d_{i,k}\cdot d_{k,l})\cdot(d_{i,k}\cdot 0))\cdot0)
   \end{equation}
 Therefore, by Equation \ref{1} and Equation \ref{2} it suffices to demonstrate: 
   \begin{equation}
       R^M_k[\psi(((d_{i,k}\cdot d_{k,l})\cdot(d_{i,k}\cdot 0))\cdot0)]=\psi(d_{i,l}).
   \end{equation}  
    For the $R^M_k[\psi(((d_{i,k}\cdot d_{k,l})\cdot(d_{i,k}\cdot 0))\cdot0)]\subseteq\psi(d_{i,l})$ inclusion, assume that $y\in R^M_k[\psi(((d_{i,k}\cdot d_{k,l})\cdot(d_{i,k}\cdot 0))\cdot0)]$. Then $xR_k^My$ for some $x\in \psi(((d_{i,k}\cdot d_{k,l})\cdot(d_{i,k}\cdot 0))\cdot0)$ so that $y\cdot \Diamond_kx=1$ by the definition of $R^M_k$. Moreover, we have:
\begin{equation}\label{monoequation}
    x\cdot(((d_{i,k}\cdot d_{k,l})\cdot(d_{i,k}\cdot 0))\cdot0)=1
\end{equation}
by the definition of $\psi$. The monotonicity of $\Diamond_k$ applied to Equation \ref{monoequation} gives: 
\begin{equation}\label{d}
    \Diamond_kx\cdot\Diamond_k(((d_{i,k}\cdot d_{k,l})\cdot(d_{i,k}\cdot 0))\cdot0)=1
\end{equation}
Our hypothesis along with Equation \ref{d} and Definition \ref{I-dimensional cylindric quasi-implication algebra}(4) then yields:
\[y\cdot\Diamond_k(((d_{i,k}\cdot d_{k,l})\cdot(d_{i,k}\cdot 0))\cdot0)=y\cdot d_{i,l}=1\] and hence $y\in\psi(d_{i,l})$, as desired. For the $\psi(d_{i,l})\subseteq R^M_k[\psi(((d_{i,k}\cdot d_{k,l})\cdot(d_{i,k}\cdot 0))\cdot0)]$ inclusion, assume that $y\in\psi(d_{i,l})$. By Definition \ref{I-dimensional cylindric quasi-implication algebra}(4), we have that $y\in\psi(\Diamond_k(((d_{i,k}\cdot d_{k,l})\cdot(d_{i,k}\cdot 0))\cdot0))$. The definition of $\psi$ then gives:
\begin{equation}\label{e}
    y\cdot \Diamond_k(((d_{i,k}\cdot d_{k,l})\cdot(d_{i,k}\cdot 0))\cdot0)=1
\end{equation}
 It suffices to find some $x\in \psi(((d_{i,k}\cdot d_{k,l})\cdot(d_{i,k}\cdot 0))\cdot0)$ such that $xR^M_ky$. Hence, set $x=((d_{i,k}\cdot d_{k,l})\cdot(d_{i,k}\cdot 0))\cdot0$. Equation \ref{e} together with the definition of $R^M_k$ yields $xR^M_ky$ and hence $y\in R^M_k[\psi(((d_{i,k}\cdot d_{k,l})\cdot(d_{i,k}\cdot 0))\cdot0)]$.  
\end{proof}

We now proceed by constructing an $I$-dimensional cylindric orthoframe from the proper filters of an $I$-dimensional cylindric quasi-implication algebra. This generalizes the construction given by McDonald \cite{mcdonald2} in the setting of cylindric ortholattices from the perspective of canonical completions. 

\begin{definition}
Let $A$ be a bounded lattice. Then:
    \begin{enumerate}
    \item a non-empty subset $x\subseteq A$ is a \emph{filter} whenever:
    \begin{enumerate}
        \item $a\in x$ and $a\leq b$ implies $b\in x$;
        \item $a\in x$ and $b\in x$ implies $a\wedge b\in x$; 
    \end{enumerate}
    \item a filter $x\subseteq A$ is \emph{proper} whenever $x\not=A$. Note that this is equivalent to the requirement that $0\not\in x$ since $0\leq a$ for all $a\in A$. 
    \end{enumerate}
\end{definition} 

    The definition of the partial-order structure $\preceq$ induced by a bounded quasi-implication algebra $A$, the definition of meets in $A$, and Lemma \ref{meets} suggests the following definition of a proper filter in a bounded quasi-implication algebra. 
\begin{definition}\label{filter}
    Let $A$ be a bounded quasi-implication algebra. A \emph{filter} on $A$ is a non-empty subset $\alpha\subseteq A$ satisfying the following conditions: 
    \begin{enumerate}
        \item if $x\in\alpha$ and $x\cdot y=1$, then $y\in\alpha$; 
        \item if $x\in \alpha$ and $y\in \alpha$, then $((x\cdot y)\cdot(x\cdot 0))\cdot 0\in \alpha$
    \end{enumerate}
    We regard a filter $\alpha\subseteq A$ as \emph{proper} whenever $0\not\in \alpha$.  
\end{definition}

Throughout the remainder of this work, if $A$ is an $I$-dimensional cylindric quasi-implication algebra, let $\mathfrak{F}(A)$ denote the collection of all proper filters of $A$, let $\Diamond_i[\alpha]=\{\Diamond_ix:x\in\alpha\}$, and for any $x\in A$, let $\phi(x)=\{\alpha\in\mathfrak{F}(A):x\in\alpha\}$.  

\begin{definition}
    Let $A$ be an $I$-dimensional cylindric quasi-implication algebra. The \emph{I-dimensional Goldblatt frame} induced by $A$ is a relational structure $X^G_A=\langle \mathfrak{F}(A);\perp^G_A,(R^G_{i})_{i\in I},(\Delta^G_{i,k})_{i,k\in I}\rangle$ such that: 
    \begin{enumerate}
        \item $\perp^G\subseteq\mathfrak{F}(A)\times\mathfrak{F}(A)$ is defined by $\alpha\perp^G\beta$ iff there exists some $x\in A$ such that $x\in\alpha$ and $x\cdot0\in\beta$; 
        \item $R^G_i\subseteq\mathfrak{F}(A)\times\mathfrak{F}(A)$ is defined by $\alpha R^G\beta\Longleftrightarrow\Diamond_i[\alpha]\subseteq\beta$; 
        \item $\Delta^G_{i,k}\subseteq\mathfrak{F}(A)$ is defined by $\Delta^G_{i,k}=\phi(d_{i,k})$.
    \end{enumerate}
\end{definition}
The following lemma will be used to simplify the proof of Theorem \ref{algebra to goldblatt frame}. 
\begin{lemma}\label{lemma 5.10}
    Let $A$ be an I-dimensional cylindric quasi-implication algebra and let $X^G_A$ be the $I$-dimensional cylindric Goldblatt frame induced by $A$. Then:
    \begin{enumerate}
        \item $\phi(x)\cap\phi(y)=\phi(((x\cdot y)\cdot(x\cdot 0))\cdot0)$;
        \item $\phi(x\cdot 0)=\phi(x)^{\perp}$; \item $\Delta^G_{i,k}=\Delta^G_{k,i}$; 
        \item $\Delta^G_{i,i}=\mathfrak{F}(A)$. 
    \end{enumerate}
\end{lemma}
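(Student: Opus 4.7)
The plan is to mirror the structure of the proof of Lemma \ref{mof lemma2} for the MacLaren frame, replacing the role of the non-zero elements by proper filters, and using Lemma \ref{meets} together with the closure properties of filters from Definition \ref{filter}. Throughout, I would use that $((x\cdot y)\cdot(x\cdot 0))\cdot 0$ is the meet of $\{x,y\}$ in the induced partial order $\preceq$, and that $1\in\alpha$ for every proper filter $\alpha$ (since picking any $w\in\alpha$, Proposition \ref{top element lemma} gives $w\cdot 1=1$ and Definition \ref{filter}(1) then yields $1\in\alpha$).

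For Part 1, the $\subseteq$ inclusion is immediate from Definition \ref{filter}(2): if $x,y\in\alpha$, then $((x\cdot y)\cdot(x\cdot 0))\cdot 0\in\alpha$. For the $\supseteq$ inclusion, Lemma \ref{partial order} combined with Lemma \ref{meets} yields
\[
\big(((x\cdot y)\cdot(x\cdot 0))\cdot 0\big)\cdot x \;=\; \big(((x\cdot y)\cdot(x\cdot 0))\cdot 0\big)\cdot y \;=\; 1,
\]
so Definition \ref{filter}(1) transports membership of the meet separately to $x$ and to $y$.

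For Part 2, the inclusion $\phi(x\cdot 0)\subseteq\phi(x)^{\perp}$ is direct: if $x\cdot 0\in\beta$ and $\alpha\in\phi(x)$, then $x$ itself witnesses $\alpha\perp^{G}\beta$. The reverse inclusion is the subtle part. When $x=0$ both sides coincide with $\mathfrak{F}(A)$, since $\phi(0)=\emptyset$ forces $\phi(x)^{\perp}=\mathfrak{F}(A)$ vacuously while $0\cdot 0=1$ lies in every proper filter. When $x\neq 0$, I would consider the principal upset $\alpha_x:=\{y\in A:x\cdot y=1\}$ and verify that it satisfies the two conditions of Definition \ref{filter} (using transitivity of $\preceq$ and the fact that $x$ is a lower bound of any pair of elements above it in $\preceq$), and that it is proper since $x\neq 0$ precludes $x\preceq 0$. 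As $x\in\alpha_x$, we have $\alpha_x\in\phi(x)$, so $\beta\in\phi(x)^{\perp}$ gives $\alpha_x\perp^{G}\beta$. Unwinding, there is some $z\in\alpha_x$ with $z\cdot 0\in\beta$; since $x\preceq z$, the orthocomplementation $y\mapsto y\cdot 0$ is order-reversing by Lemma \ref{quasi-implication algebra is an orthomodular lattice}, giving $z\cdot 0\preceq x\cdot 0$, and Definition \ref{filter}(1) then places $x\cdot 0$ into $\beta$.

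Parts 3 and 4 are instantaneous: $\Delta^{G}_{i,k}=\phi(d_{i,k})=\phi(d_{k,i})=\Delta^{G}_{k,i}$ by Definition \ref{I-dimensional cylindric quasi-implication algebra}(3), and $\Delta^{G}_{i,i}=\phi(d_{i,i})=\phi(1)=\mathfrak{F}(A)$. The main obstacle I anticipate is the reverse inclusion of Part 2: it is the only step that requires constructing a fresh filter from algebraic data, and the bookkeeping for the principal upset $\alpha_x$ (verifying both filter axioms and properness), together with the corner case $x=0$, must be handled explicitly.
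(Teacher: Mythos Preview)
Your proposal is correct and follows essentially the same approach as the paper: filter axioms plus Lemma~\ref{meets} for Part~1, the principal filter ${\uparrow}x$ for the delicate inclusion in Part~2, and direct axiom-chasing for Parts~3 and~4. Your treatment is in fact slightly more careful than the paper's, since you handle the corner case $x=0$ explicitly (the paper silently assumes ${\uparrow}x$ is proper), and where the paper derives $(y\cdot 0)\cdot(x\cdot 0)=1$ from $x\cdot y=1$ by an explicit chain through Definition~\ref{quasi-implication algebra}(2), Lemma~\ref{lemma}(4), and Proposition~\ref{left monotone}, you simply invoke the order-reversing property of the induced orthocomplementation from Lemma~\ref{quasi-implication algebra is an orthomodular lattice}.
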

\begin{proof}
For part 1, we first consider the $\phi(((x\cdot y)\cdot(x\cdot 0))\cdot0)\subseteq\phi(x)\cap\phi(y)$ inclusion. Assume $\alpha\in\phi(((x\cdot y)\cdot(x\cdot 0))\cdot0)$. By the definition of $\phi$, we have $((x\cdot y)\cdot(x\cdot0))\cdot0)\in\alpha$. By Lemma \ref{partial order} combined with Lemma \ref{meets}: 
   \begin{equation}\label{12}
       (((x\cdot y)\cdot(x\cdot 0))\cdot0)\cdot x=(((x\cdot y)\cdot(x\cdot 0))\cdot0)\cdot y=1. 
   \end{equation}
   Since $\alpha$ is a filter and $((x\cdot y)\cdot(x\cdot0))\cdot0)\in\alpha$, Equation \ref{12} together with Definition \ref{filter}(1) implies $x\in\alpha$ and $y\in\alpha$ so $\alpha\in\phi(x)$ and $\alpha\in\phi(y)$, whence $\alpha\in\phi(x)\cap\phi(y)$. The $\phi(x)\cap\phi(y)\subseteq\phi(((x\cdot y)\cdot(x\cdot 0))\cdot0)$ inclusion runs analogously.  

For part 2, let $\alpha\in\phi(x\cdot 0)$ so that $x\cdot0\in\alpha$. Then for any $\beta\in\phi(x)$, we have $x\in\beta$ and hence $\alpha\perp^G\beta$. Hence $\alpha\perp^G\phi(x)$ which implies $\alpha\in\phi(x)^{\perp}$. Conversely, assume $\alpha\in\phi(x)^{\perp}$ and let $\beta$ be the principal filter generated by $x$ so that $\beta={\uparrow}x=\{y\in A:x\cdot y=1\}$. Clearly $\beta\in\phi(x)$ and thus $\alpha\perp^G\beta$ which implies that $y\cdot 0\in\alpha$ and $y\in\beta$ for some $y\in A$. Our construction of $\beta$ implies that $x\cdot y=1$. Clearly $0\cdot x=1$ for all $x\in A$ by Proposition \ref{top element lemma} so $0\cdot((y\cdot x)\cdot 0)=1$. By Proposition \ref{left monotone}, we have $(y\cdot 0)\cdot(y\cdot((y\cdot x)\cdot 0))=1$ but: 
\begin{align*}
    y\cdot((y\cdot x)\cdot0)&=(y\cdot x)\cdot(y\cdot 0)\tag{Lemma \ref{lemma}(4)}
    \\&=(x\cdot y)\cdot(x\cdot0)\tag{Definition \ref{quasi-implication algebra}(2)}
    \\&=1\cdot(x\cdot0)\tag{since $x\cdot y=1$}
    \\&=x\cdot0\tag{Proposition \ref{top element lemma}}
\end{align*}
Therefore we have that $x\cdot 0\in\alpha$ and hence $\alpha\in\phi(x\cdot 0)$. For part 3, applying the definition of $\Delta^G_{i,k}$, $\Delta^G_{k,i}$, $\phi$, and Definition \ref{I-dimensional cylindric quasi-implication algebra}(2) yields the following: \[\Delta^G_{i,k}=\phi(d_{i,k})=\{\alpha\in\mathfrak{F}(A):d_{i,k}\in\alpha\}=\{\alpha\in\mathfrak{F}(A):d_{k,i}\in\alpha\}=\phi(d_{k,i})=\Delta^G_{k,i}.\]
   For part 4, we note that $\Delta^G_{i,i}=\phi(d_{i,i})=\phi(1)=\{\alpha\in\mathfrak{F}(A):1\in\alpha\}=\mathfrak{F}(A)$ since $x\cdot 1=1$ for all $x\in A$ by Proposition \ref{top element lemma} and $\alpha$ is an upset.  
\end{proof}
\begin{theorem}\label{algebra to goldblatt frame}
    If $A$ is an $I$-dimensional cylindric quasi-implication algebra, then its Goldblatt frame $X^G_A$ is an $I$-dimensional cylindric orthoframe. 
\end{theorem}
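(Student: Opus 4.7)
The plan is to verify each of the four conditions of Definition \ref{col} for the Goldblatt frame $X^G_A$. For the first condition, I would invoke a prior analogous result (in the spirit of \cite[Theorem 5.7]{mcdonald1} but adapted to the filter-based Goldblatt construction) to conclude that the reduct $\langle \mathfrak{F}(A); \perp^G, R^G_i\rangle$ is a monadic orthoframe for each $i \in I$, since each $\langle A; \cdot, 0, \Diamond_i\rangle$ is a monadic quasi-implication algebra by Definition \ref{I-dimensional cylindric quasi-implication algebra}(1).

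For condition 2, commutativity $R^G_i \circ R^G_k = R^G_k \circ R^G_i$, I would fix $\alpha R^G_i \beta R^G_k \gamma$, so that $\Diamond_i[\alpha] \subseteq \beta$ and $\Diamond_k[\beta] \subseteq \gamma$, and construct an intermediate $\delta$ witnessing $\alpha R^G_k \delta R^G_i \gamma$. The natural candidate is $\delta = \{y \in A : \Diamond_k x \leq y \text{ for some } x \in \alpha\}$. I would then verify that $\delta$ is a proper filter: closure under binary meets uses that $\alpha$ itself is closed under meets together with monotonicity of $\Diamond_k$ (giving $\Diamond_k(x_1 \wedge x_2) \leq \Diamond_k x_1 \wedge \Diamond_k x_2$), while $0 \in \delta$ would force $\Diamond_k x = 0$ for some $x \in \alpha$, hence $x = 0$ via $x \leq \Diamond_k x$, contradicting the properness of $\alpha$. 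The containment $\Diamond_k[\alpha] \subseteq \delta$ is immediate, and $\Diamond_i[\delta] \subseteq \gamma$ follows from the calculation $\Diamond_i y \geq \Diamond_i \Diamond_k x = \Diamond_k \Diamond_i x \in \gamma$ (using $\Diamond_i \Diamond_k = \Diamond_k \Diamond_i$, $\Diamond_i x \in \beta$, and $\Diamond_k[\beta] \subseteq \gamma$) combined with upward closure of $\gamma$. The reverse inclusion is symmetric.

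For condition 3, parts (3) and (4) of Lemma \ref{lemma 5.10} directly yield $\Delta^G_{i,k} = \Delta^G_{k,i}$ and $\Delta^G_{i,i} = \mathfrak{F}(A)$, while applying Lemma \ref{lemma 5.10}(2) twice together with Lemma \ref{quasi-implication algebra is an orthomodular lattice}(1) gives $\Delta^{G\perp\perp}_{i,k} = \phi((d_{i,k}\cdot 0)\cdot 0) = \phi(d_{i,k}) = \Delta^G_{i,k}$, since $(d_{i,k}\cdot 0)\cdot 0$ is the double orthocomplement of $d_{i,k}$ in the induced orthomodular lattice. For condition 4, mirroring the proof of Theorem \ref{algebra to maclaren frame}, I would use Lemma \ref{lemma 5.10}(1) to rewrite $\Delta^G_{i,k} \cap \Delta^G_{k,l} = \phi(e)$ where $e := ((d_{i,k}\cdot d_{k,l})\cdot(d_{i,k}\cdot 0))\cdot 0$, and note $\Diamond_k e = d_{i,l}$ by Definition \ref{I-dimensional cylindric quasi-implication algebra}(4). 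It then suffices to establish $R^G_k[\phi(x)] = \phi(\Diamond_k x)$ for every $x \in A$: the $\subseteq$ direction is immediate, and for $\supseteq$, given $\beta \in \phi(\Diamond_k x)$, I would take $\alpha$ to be the principal filter ${\uparrow}x$ (which is proper since $\Diamond_k x \in \beta$ forces $x \neq 0$) and use monotonicity of $\Diamond_k$ to verify $\Diamond_k[\alpha] \subseteq \beta$. The main obstacle is the filter construction in condition 2; the choice of $\delta$ above (essentially the upward closure of $\Diamond_k[\alpha]$) is what makes both properness and the inclusion $\Diamond_i[\delta]\subseteq\gamma$ fall out cleanly, thanks to $\alpha$ being a filter.
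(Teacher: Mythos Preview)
Your proposal is correct and follows essentially the same approach as the paper: both invoke the prior monadic result for condition~1, construct the intermediate filter generated by $\Diamond_k[\alpha]$ for condition~2, appeal to Lemma~\ref{lemma 5.10} for condition~3, and use the principal filter ${\uparrow}e$ with $e=((d_{i,k}\cdot d_{k,l})\cdot(d_{i,k}\cdot 0))\cdot 0$ for condition~4. Your treatment is in fact more explicit than the paper's in two places---you actually verify that the intermediate filter in condition~2 is proper and satisfies $\Diamond_i[\delta]\subseteq\gamma$ (the paper merely asserts this is ``obvious''), and you isolate the general identity $R^G_k[\phi(x)]=\phi(\Diamond_k x)$ rather than arguing it ad hoc for $x=e$---but these are refinements of presentation, not differences of method.
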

\begin{proof}
    It follows by McDonald \cite[Theorem 5.9]{mcdonald1} that the reduct $\langle\mathfrak{F}(A);\perp^G,R^G\rangle$ of $X^G_A$ is a monadic orthoframe whenever $A$ is a monadic quasi-implication algebra and hence we first verify that $R^G_i\circ R^G_k=R^G_k\circ R^G_i$ for all $i,k\in I$. Assume that $\alpha R^G_i\beta$ and $\beta R^G_k\gamma$. By the definition of $R^G_i$ and $R^G_k$, this implies $\Diamond_i[\alpha]\subseteq\beta$ and $\Diamond_k[\beta]\subseteq\gamma$. It suffices to find some proper filter $\epsilon\in\mathfrak{F}(A)$ such that $\alpha R^G_k\epsilon$ and $\epsilon R^G_i\gamma$. Therefore, let: 
    \[\epsilon=\bigcap\{\eta\in\mathfrak{F}(A):\Diamond_k[\alpha]\subseteq\eta\}.\] Observe that $\epsilon$ is in particular, the proper filter generated by $\Diamond_k[\alpha]$ and that $\epsilon$ is proper since $\alpha$ is assumed to be proper. Given our construction of $\epsilon$, it is obvious that $\Diamond_k[\alpha]\subseteq\epsilon$ and $\Diamond_i[\epsilon]\subseteq\gamma$ so that $\alpha R_k\epsilon$ and $\epsilon R_i\gamma$. This implies $R^G_i\circ R^G_k\subseteq R^G_k\circ R^G_i$. A symmetric argument exhibits the inclusion $R^G_k\circ R^G_i=R^G_i\circ R^G_k$ and hence we conclude that $R^G_i\circ R^G_k=R^G_k\circ R^G_i$, as desired.

    To verify that $\Delta^G_{i,k}=\Delta_{i,k}^{G\perp\perp}$, observe that by Lemma \ref{lemma 5.10}(2), we have: 
    \[\Delta^G_{i,k}=\phi(d_{i,k})=\phi((d_{i,k}\cdot0)\cdot0)=\phi(d_{i,k}\cdot0)^{\perp}=\phi(d_{i,k})^{\perp\perp}=\Delta_{i,k}^{G\perp\perp}\] The remainder of condition 3 follows from conditions 3 and 4 of Lemma \ref{lemma 5.10}. It remains to verify $R^G_k[\Delta^G_{i,k}\cap\Delta^G_{k,l}]=\Delta^G_{i,l}$. By the definition of $\Delta^G_{i,k}$ and $\Delta^G_{k,l}$:
   \begin{equation}\label{16}
       R^G_{k}[\Delta^G_{i,k}\cap \Delta^G_{k,l}]=R^G_{k}[\phi(d_{i,k})\cap\phi(d_{k,l})]
   \end{equation}
   By condition 1 of Lemma \ref{lemma 5.10} we have:
   \begin{equation}\label{15}
\phi(d_{i,k})\cap\phi(d_{k,l})=\phi(((d_{i,k}\cdot d_{k,l})\cdot(d_{i,k}\cdot 0))\cdot0)
   \end{equation}
 Therefore, by Equation \ref{16} and Equation \ref{15} it suffices to demonstrate: 
   \begin{equation}\label{17}
       R^G_k[\phi(((d_{i,k}\cdot d_{k,l})\cdot(d_{i,k}\cdot 0))\cdot0)]=\phi(d_{i,l}).
   \end{equation}
   Assume that $\beta\in R^G_k[\phi(((d_{i,k}\cdot d_{k,l})\cdot(d_{i,k}\cdot 0))\cdot0)]$. Then $\alpha R^G_k\beta$ for some $\alpha\in\phi(((d_{i,k}\cdot d_{k,l})\cdot(d_{i,k}\cdot 0))\cdot0)$ and hence $\Diamond_k[\alpha]\subseteq\beta$. Then $((d_{i,k}\cdot d_{k,l})\cdot(d_{i,k}\cdot 0))\cdot0\in\alpha$ so $\Diamond_k(((d_{i,k}\cdot d_{k,l})\cdot(d_{i,k}\cdot 0))\in\beta$. By Definition \ref{I-dimensional cylindric quasi-implication algebra}(3), we have: 
   \begin{equation}\label{32}
       \Diamond_k((d_{i,k}\cdot d_{k,l})\cdot(d_{i,k}\cdot 0))\cdot0=d_{i,l}
   \end{equation}
   and therefore $d_{i,l}\in\beta$ so $\beta\in\phi(d_{i,l})$ which establishes the left-to-right inclusion of Equation \ref{17}. For the converse inclusion, take $\beta\in\phi(d_{i,l})$. Equation \ref{32} implies $\beta\in\phi(\Diamond_k((d_{i,k}\cdot d_{k,l})\cdot(d_{i,k}\cdot 0))\cdot0)$ and hence $\Diamond_k((d_{i,k}\cdot d_{k,l})\cdot(d_{i,k}\cdot 0))\cdot0\in\beta$. We want to find some proper filter $\alpha$ satisfying the conditions that $\alpha\in\phi(((d_{i,k}\cdot d_{k,l})\cdot(d_{i,k}\cdot 0))\cdot0)$ and $\Diamond_k[\alpha]\subseteq\beta$. We construct such an $\alpha$ by:
   \begin{equation}
       \alpha={\uparrow}(((d_{i,k}\cdot d_{k,l})\cdot(d_{i,k}\cdot 0))\cdot0)=\{x\in A:(((d_{i,k}\cdot d_{k,l})\cdot(d_{i,k}\cdot 0))\cdot0)\cdot x=1\}
   \end{equation}
   It is an easy exercise to see that $\alpha$ is a proper filter. In particular, $\alpha$ is the principal filter generated by $((d_{i,k}\cdot d_{k,l})\cdot(d_{i,k}\cdot 0))\cdot0$. Note that by Definition \ref{top element}, we have: 
   \begin{equation}
       (((d_{i,k}\cdot d_{k,l})\cdot(d_{i,k}\cdot 0))\cdot0)\cdot(((d_{i,k}\cdot d_{k,l})\cdot(d_{i,k}\cdot 0))\cdot0)=1
   \end{equation}
   and hence $((d_{i,k}\cdot d_{k,l})\cdot(d_{i,k}\cdot 0))\cdot0\in\alpha$ by our construction of $\alpha$. Therefore we have that $\alpha\in\phi(((d_{i,k}\cdot d_{k,l})\cdot(d_{i,k}\cdot 0))\cdot0)$. Hence choose any $x\in\Diamond_k[\alpha]$ so that $x=\Diamond_ky$ for some $y\in\alpha$. We consider two cases. First, if $y=((d_{i,k}\cdot d_{k,l})\cdot(d_{i,k}\cdot 0))\cdot0$, then $\Diamond_ky=\Diamond_k(((d_{i,k}\cdot d_{k,l})\cdot(d_{i,k}\cdot 0))\cdot0)$ which implies $\Diamond_ky=x\in\beta$, as desired. If $(((d_{i,k}\cdot d_{k,l})\cdot(d_{i,k}\cdot 0))\cdot0)\cdot y=1$ with $(((d_{i,k}\cdot d_{k,l})\cdot(d_{i,k}\cdot 0))\cdot0)\not=y$, then by Proposition \ref{idempotent}, we have: 
   \begin{equation}\label{13}
       \Diamond_k(((d_{i,k}\cdot d_{k,l})\cdot(d_{i,k}\cdot 0))\cdot0)\cdot\Diamond_ky=1.
   \end{equation}
 Since $\Diamond_k(((d_{i,k}\cdot d_{k,l})\cdot(d_{i,k}\cdot 0))\cdot0)\in\beta$, Equation \ref{13} together with Definition \ref{filter}(1) implies $\Diamond_ky=x\in\beta$, as desired. Therefore we conclude that $\phi(d_{i,l})\subseteq R^G_k[\phi(((d_{i,k}\cdot d_{k,l})\cdot(d_{i,k}\cdot 0))\cdot0)]$, and hence we have established Equation \ref{17}, which completes the proof.   
\end{proof}

\section{Conclusions}
In this note, we studied the operation of Sasaki hook in quantum cylindric algebras by introducing cylindric quasi-implication algebras. We showed that the category $\mathbf{QCA}$ of quantum cylindric algebras is isomorphic to the category $\mathbf{CQIA}$ of cylindric quasi-implication algebras. We then generalized the constructions of cylindric orthoframes in \cite{harding, mcdonald2} to the setting of cylindric quasi-implication algebras. It is worth making an analogous observation that was pointed out in \cite[Section 6]{mcdonald1} that the construction of the corresponding cylindric orthoframe induced by a cylindric quasi-implication algebra $A$ does not make use of the orthomodularity of the cylindric ortholattice induced by $A$.   
\section*{Declarations}

\paragraph{Ethical approval}
Not applicable. 

\paragraph{Availability of data and materials}
Not applicable. 

\paragraph{Competing interests} The author declares no competing interests. 

\par
\vspace{5cm}

\end{document}